\newtheorem{rem}{Remark}[section]
\newcommand{\md}{\mathrm{d}}
\newcommand\opG{{\mathcal{G}}}
\newcommand\En{{\mathcal{E}}}
\newcommand\opL{{\mathcal{L}}}
\numberwithin{theorem}{section}
\numberwithin{equation}{section}
\newtheorem{example}{\textit{Example}}
\definecolor{orange}{rgb}{1,0.5,0}
\definecolor{rb}{rgb}{1,0,1}
\begin{document}

\headers{Energy stable scheme for gradient flows}{J. Shen, J. Xu, J. Yang}

\title{{A new class of efficient and robust energy stable schemes for gradient flows}\thanks{This work is partially supported  by DMS-1620262, DMS-1720442 and AFOSR FA9550-16-1-0102.}}

\author{
  Jie Shen\thanks{Department of Mathematics, Purdue University, West Lafayette, IN 47907, USA (\email{shen7@purdue.edu}).}
  \and
  Jie Xu\thanks{Department of Mathematics, Purdue University, West Lafayette, IN 47907, USA (\email{xu924@purdue.edu}).}
  \and
  Jiang Yang\thanks{Department of Mathematics, Southern University of Science and Technology, Shenzhen, Guangdong 518000, China (\email{yangj7@sustc.edu.cn}).}
}

\maketitle

\begin{abstract}
We propose a new  numerical technique to  deal with nonlinear terms in  gradient flows. By introducing a scalar auxiliary
variable (SAV), we construct efficient and robust energy
stable schemes for a large class of gradient flows. The SAV approach is not restricted to specific forms of the
nonlinear part of the free energy, and only requires to solve {\it decoupled}
linear equations with {\it constant coefficients}. 
We use this technique to deal with  several challenging applications
which can not be easily handled by existing approaches, and present
convincing numerical results to show that our schemes are not only much
more efficient and easy to implement, but   can also better capture
the physical properties in these models. Based on this SAV approach, we can construct unconditionally second-order energy stable schemes; and 
we can easily construct even third or fourth order BDF schemes, although not unconditionally stable, 
which are very robust in practice. In particular, when coupled with an  adaptive time stepping strategy, the SAV approach can be extremely efficient and accurate. 

\end{abstract}

\begin{keywords}
  gradient flows;  energy stability;  Allen--Cahn and Cahn--Hilliard equations; phase field models; nonlocal models. 
\end{keywords}

\begin{AMS}
65M12;  35K20; 35K35; 35K55; 65Z05.
\end{AMS}

\section{Introduction}
Gradient flows are dynamics driven by a free energy. 
Many physical problems can be modeled  by PDEs that take the form of gradient flows, which are often derived from the second law of thermodynamics. 
Examples of these problems include interface dynamics \cite{anderson1998diffuse,gurtin1996two,kim2012phase,liu2003phase,lowengrub1998quasi,yue2004diffuse}, crystallization \cite{elder2002modeling,elder2004modeling,elder2007phase}, thin films \cite{giacomelli2001variatonal,otto1998lubrication}, polymers \cite{maurits1997mesoscopic,fraaije1993dynamic,fraaije2003model,fraaije1997dynamic} and liquid crystals \cite{leslie1979theory,doi1988theory,larson1990arrested,larson1991effect,forest2004weak,forest2004flow,qian1998generalized,yu2010nonhomogeneous}. 

A gradient flow is determined not only by the driving free energy, but also the dissipation mechanism. 
Given a free energy functional $\En [\phi(\bm{x})]$ bounded from below. 
Denote its variational derivative as $\mu=\delta \En /\delta \phi$. 
The general form of the gradient flow can be written as
\begin{equation}\label{Gflow}
  \frac{\partial \phi}{\partial t}=\opG\mu, 
\end{equation}
supplemented with suitable boundary conditions. To simplify the presentation, we   assume  throughout the paper that the boundary conditions are chosen such that all boundary terms will vanish when integrating by parts are performed. This is true with periodic boundary conditions or homogeneous Neumann boundary conditions.  

In the above, a non-positive symmetric operator $\opG$ gives the dissipation mechanism. 
The commonly adopted dissipation mechanisms include the $L^2$ gradient flow where $\opG=-I$, the $H^{-1}$ gradient flow where $\opG=\Delta$, or more generally  non-local $H^{-\alpha}$ gradient flow where $\opG=-(-\Delta)^\alpha$ $(0< \alpha <1)$ (cf. \cite{Ain.M17}). 
For more complicated dissipation mechanisms, $\opG$ may be nonlinear and may depend on $\phi$. An example is the Wasserstein gradient flow for $\phi>0$, where $\opG \mu=\nabla\cdot(\phi\nabla \mu)$ (cf. \cite{doi1988theory,jordan1998variational}). 
As long as $\opG$ is non-positive, the free energy is non-increasing, 
\begin{equation}
  \frac{\md \En[\phi]}{\md t}=\frac{\delta\En}{\delta\phi}\cdot\frac{\partial \phi}{\partial t}=(\mu,\opG\mu)\le 0, \label{energy_cont}
\end{equation}
where $(\phi,\psi)=\int_\Omega \phi \psi \md\bm{x}$.
In this paper, we will focus on the case where $\opG$ is non-positive, linear and independent of $\phi$. 

Although gradient flows take various forms, from the numerical perspective, a scheme is generally evaluated from the following  aspects: 
\begin{enumerate}[(i)]
\item whether the scheme keeps the energy dissipation; 
\item the order of its accuracy; 
\item its efficiency; 
\item whether the scheme is easy to implement. 
\end{enumerate}
Among these the first aspect is particularly important, and is crucial to eliminating  numerical results that are not physical. 
Oftentimes, if this is not put into thorough consideration when constructing the scheme, it may require a time step extremely small to keep the energy dissipation. 

Usually, the free energy functional contains a quadratic term, 
which we write explicitly as 
\begin{equation}
  \En[\phi]=\frac{1}{2}(\phi,\opL\phi)+\En_1[\phi], \label{energy0}
\end{equation}
where $\opL$ is a symmetric non-negative linear operator (also independent of $\phi$), and $\En_1[\phi]$   are nonlinear but with only lower-order derivatives than $\opL$ . 
To obtain an energy dissipative scheme, the linear term is usually treated implicitly in some manners, while different approaches have to be used 
 for  nonlinear terms. 
In the next few paragraphs, we briefly review the existing approaches for dealing with the nonlinear terms. 

The first approach is the convex splitting method which was perhaps first introduced in \cite{MR1249036} but popularized by \cite{eyre1998unconditionally}. 
If we can express the free energy as the difference of two convex functional, namely $\En=\En_c-\En_e$ where both $\En_c$ and $\En_e$ are convex about $\phi$, then a simple convex splitting scheme reads
\begin{equation}
  \frac{\phi^{n+1}-\phi^n}{\Delta t}
  =\opG\left(\frac{\delta \En_c}{\delta\phi}[\phi^{n+1}]
  -\frac{\delta \En_e}{\delta\phi}[\phi^{n}]\right). \label{CnvSpl}
\end{equation}
By using the property of convex functional, 
$$
\En_c[\phi_2]-\En_c[\phi_1]\ge \frac{\delta\En_c}{\delta \phi}[\phi_1](\phi_2-\phi_1), 
$$
and  multiplying \eqref{CnvSpl} with $({\delta \En_c}/{\delta\phi})[\phi^{n+1}]-({\delta \En_e}/{\delta\phi})[\phi^{n}]$,
it is easy to check that the scheme satisfies the discrete energy law $\En[\phi^{n+1}]\le \En[\phi^n]$ unconditionally. 
Because the implicit part $\delta F_c/\delta\phi$ is usually nonlinear about $\phi$, 
we need to solve nonlinear equations at each time step, which can be expensive. The scheme \eqref{CnvSpl} is only first-order. While it is possible to construct second-order convex splitting schemes for certain situations on a case by case basis  (see, for instance, \cite{shen2012second,MR3118257,MR3164683}),
 a general formulation of second-order convex splitting schemes is not available. 

The second approach is the so-called stabilization method which  treats the nonlinear terms explicitly, and  add a stabilization term to avoid strict time step constraint \cite{ZCST99, shen2010numerical}. More precisely,
if we can find a simple linear operator $\tilde{\opL}$ such that both $\tilde{\opL}$ and $\tilde{\opL}-(\delta^2\En_1/\delta\phi^2)[\phi]$ are positive, 
then we may choose a particular convex splitting, 
$$
\En_c=\frac{1}{2}(\phi,\opL\phi)+\frac{1}{2}(\phi,\tilde{\opL}\phi),\quad 
\En_e=\frac{1}{2}(\phi,\tilde{\opL}\phi)+\En_1[\phi], 
$$
which leads to the following unconditionally energy stable scheme:
\begin{equation}
  \frac{\phi^{n+1}-\phi^n}{\Delta t}
  =\opG\left(\opL \phi^{n+1}+\frac{\delta \En_1}{\delta \phi}[\phi^n]+\tilde{\opL}(\phi^{n+1}-\phi^n)\right). 
\end{equation}
Hence, the stabilization method is in fact a special class of convex splitting method.
A common choice of $\tilde{\opL}$ is 
$$
\tilde{\opL}=a_0+a_1(-\Delta)+a_2(-\Delta)^2+\ldots. 
$$
The advantage of the stabilization method is that when the dissipation operator $\opG$ is also linear, we only need to solve a linear system like $(1-\Delta t\opG(\opL+\tilde{\opL}))\phi^{n+1}=b^n$ at each time step. 
However, it is not always the case that $\tilde{\opL}$ can be found. 
The stabilization method can be extended to second-order schemes, 
but in general it can not be unconditionally energy stable, see however a recent work in \cite{Li.Q17}.
 On the other hand, a related method is the exponential time differencing (ETD) approach in which the operator $\tilde{\opL}$ is integrated exactly (see, for instance, \cite{MR3299200} for an example on related applications).

The third approach is the method of invariant energy quadratization (IEQ), which  was proposed very recently in \cite{yang2016linear,zhao2016numerical}. This method is a generalization of the method of Lagrange multipliers or of auxiliary variables originally proposed in \cite{BGG11,Gui.T13}.
In this approach, $\En_1$ is assumed to take the form $\En_1[\phi]=\int_\Omega g(\phi) \md\bm{x}$ where $g(\phi)\ge -C_0$ for some $C_0>0$. 
One then introduces an auxiliary variable $q=\sqrt{g+C_0}$, and transform \eqref{Gflow} into a equivalent system, 
\begin{subequations}\label{IEQeq}
\begin{align}
  \frac{\partial\phi}{\partial t}=&\opG\left(\opL\phi+\frac{q}{\sqrt{g(\phi)+C_0}}g'(\phi)\right),\label{IEQeq_phi} \\
  \frac{\partial q}{\partial t}=&\frac{g'(\phi)}{2\sqrt{g(\phi)+C_0}}\frac{\partial\phi}{\partial t}. \label{IEQeq_r}
\end{align}
\end{subequations}
Using the fact that $\En_1[\phi]=\int_\Omega q^2 \md\bm{x}$ is convex about $q$, 
we can easily construct simple and linear energy stable schemes.  For instance, 
a first-order scheme is given by 
\begin{subequations}\label{IEQ}
\begin{align}
  \frac{\phi^{n+1}-\phi^{n}}{\Delta t}=&\opG\mu^{n+1},\label{IEQ1}\\
  \mu^{n+1}=&\opL\phi^{n+1}+\frac{q^{n+1}}{\sqrt{g(\phi^n)+C_0}}g'(\phi^n),\label{IEQ2}\\
  \frac{q^{n+1}-q^n}{\Delta t}=&\frac{g'(\phi^n)}{2\sqrt{g(\phi^n)+C_0}}\frac{\phi^{n+1}-\phi^{n}}{\Delta t}.\label{IEQ3}
\end{align}
\end{subequations}
One can easily show that the above scheme is unconditionally energy stable.
Furthermore, eliminating $q^{n+1}$ and $\mu^{n+1}$,  we obtain a linear system for $\phi^{n+1}$ in the following form:
\begin{equation}
  \left(\frac{1}{\Delta t}-\opG\opL-\opG\frac{(g'(\phi^n))^2}{2g(\phi^n)}\right)\phi^{n+1}=b^n. \label{IEQ_linear}
\end{equation}
Similarly, one can also construct unconditionally energy stable second-order schemes. 
The IEQ approach is remarkable as it allows us to construct linear, unconditionally stable, and second-order unconditionally energy stable schemes for a large class of gradient flows. However, it still suffer from the following drawbacks:
\begin{itemize}
\item Although one only needs to solve a linear system at each time step, the linear system usually involves  variable coefficients  which change at each time step. 
\item For gradient flows with multiple components, the IEQ approach will lead to coupled systems with variable coefficients. 
\item It requires that the energy density function $g(\phi)$ is bounded from below, while in some case, one can only assume that $\En_1[\phi]=\int_\Omega g(\phi) \md\bm{x}$ is bounded from below.
\end{itemize}
In \cite{SXY17}, we introduced the so-called 
 scalar auxiliary variable (SAV) approach, which inherits all advantages of IEQ approach but also overcome most of its shortcomings. 
More precisely, by using the Cahn-Hilliard equation and a system of Cahn-Hilliard equations as examples, we showed that the SAV approach has the following advantages: 
\begin{enumerate}[(i)]
\item 
For single-component gradient flows, it leads to, at each time step,  linear equations with constant coefficients so it is remarkably easy to implement. 
\item For multi-component gradient flows, it leads to, at each time step, decoupled linear equations with constant coefficients, one for each component. 
\end{enumerate}
The main goals of this paper are (i) to expand the SAV approach to a more general setting, and apply it to several challenging applications, such as non-local phase field crystals, Molecular beam epitaxial without slope section, a Q-tensor model for liquid crystals,  a phase-field model for two-phase incompressible flows;
(ii) to  show that, besides its simplicity and efficiency, the novel schemes  present better accuracy  compared with other schemes; and
 (iii) to validate the effectiveness and robustness of the SAV approach coupled with high-order BDF schemes and adaptive time stepping.

We emphasize that the schemes  are formulated in a general form that are  applicable to a large class of gradient flows. 
We also suggest some criteria on the choice of $\opL$ and $\En_1$, which is useful when attempting to construct numerical schemes for particular gradient flows. 

The rest of paper is organized as follows. In Section \ref{Schm}, we describe the construction of SAV schemes for gradient flows in a general form. In Section 3, we present several numerical examples to validate the SAV approach.
In Section 4, we describe how to construct higher-order SAV schemes and how to implement adaptive time stepping. 
We then apply the SAV approach to construct second-order unconditionally stable, decoupled linear schemes for several challenging situations in Section 5, followed by some
concluding remarks in Section 6. 

\section{SAV approach for constructing energy stable schemes\label{Schm}}
In this section, we formulate the SAV approach introduced in \cite{SXY17} for a class of general gradient flows.

\subsection{Gradient flows of a single function}
We consdier \eqref{Gflow} with free energy in the form of  \eqref{energy0} such that
 $\En_1[\phi]$ is bounded from below. 
Without loss of generality, we assume that $\En_1[\phi]\ge C_0>0$, otherwise we may add a constant to $\En_1$ without altering the gradient flow. 
We introduce a scalar auxiliary  variable $r=\sqrt{\En_1}$,  
and rewrite the gradient flow \eqref{Gflow} as
\begin{subequations}\label{SAV1}
\begin{align}
 & \frac{\partial\phi}{\partial t}=\opG\mu,\label{SAVeq_mu}\\
& \mu=\opL\phi+\frac{r}{\sqrt{\En_1[\phi]}}U[\phi], \label{SAVeq_phi}\\
 & r_t=\frac{1}{2\sqrt{\En_1[\phi]}}\int_\Omega  U[\phi]\phi_t \md\bm{x}, \label{SAVeq_r}
\end{align}
\end{subequations}
where
\begin{equation}
  U[\phi]=\frac{\delta \En_1}{\delta \phi}. 
\end{equation}
Taking the inner products of the above with $\mu$, $\frac{\partial\phi}{\partial t}$ and  $2r$, respectively,  we obtain the energy dissipation law for \eqref{SAV1}:
\begin{equation}
 \frac{d}{dt}[(\phi, \opL\phi)+ r^2]=(\mu,\opG\mu).
\end{equation}
Note that this equivalent system  \eqref{SAV1} is similar to the system \eqref{IEQeq_phi}  and \eqref{IEQeq_r} in the IEQ approach, except that a scalar auxiliary  variable  $r$ is introduced  instead of a function $q(\phi)$. 
To illustrate the advantage of  SAV  over IEQ, we start from a first-order scheme: 
\begin{subequations}\label{SAV_first}
\begin{align}
  \frac{\phi^{n+1}-\phi^{n}}{\Delta t}=&\opG\mu^{n+1}, \label{SAV_first1}\\
  \mu^{n+1}=&\opL\phi^{n+1}+\frac{r^{n+1}}{\sqrt{\En_1[\phi^n]}}U[\phi^n],\label{SAV_first2}\\
  \frac{r^{n+1}-r^n}{\Delta t}=&\frac{1}{2\sqrt{\En_1[\phi^n]}}\int_\Omega U[\phi^n]\frac{\phi^{n+1}-\phi^{n}}{\Delta t}\md\bm{x}.\label{SAV_first3}
\end{align}
\end{subequations}
Multiplying the three equations with $\mu^{n+1}$, $(\phi^{n+1}-\phi^n)/\Delta t$, $2r^{n+1}$, integrating the first two equations, and adding them together, we obtain the discrete energy law: 
\begin{align*}
  \frac{1}{\Delta t}&\Big[\tilde\En(\phi^{n+1},r^{n+1})-\tilde\En(\phi^n,r^n)\big]\\
  &+ \frac{1}{\Delta t}\Big[\frac{1}{2}(\phi^{n+1}-\phi^n,\opL(\phi^{n+1}-\phi^n))+(r^{n+1}-r^n)^2\Big]=(\mu^{n+1},\opG\mu^{n+1}),\label{energy_discrete_first}
\end{align*}
where we defined  a modified energy 
\begin{equation}\label{modE}
 \tilde \En(\phi, r)=\frac12(\phi,\opL\phi)+r^2.
\end{equation}
Thus, the scheme is unconditionally energy stable with the modified energy. Note that, while $r=\sqrt{\En(\phi)}$,  we do not  have $r^n=\sqrt{\En(\phi^n)}$ so the modified energy $\tilde \En(\phi^n, r^n)$ is different  from the original energy $\En(\phi^n)$.

\begin{rem}
 Notice that the SAV scheme \eqref{SAV_first} is unconditionally energy stable (with a  modified energy) for arbitrary energy splitting in \eqref{energy0} as long as $\En_1$ is bounded from below. One might wonder why not taking $\opL=0$? Then, the scheme \eqref{SAV_first} would be totally explicit, i.e., without having to solve any equation, but unconditionally energy stable (with a  modified energy $r^2(t)$)!
However, energy stability alone is not sufficient for convergence.
Such scheme will not be able to produce meaningful results, since the modified energy \eqref{modE} reduces to $r^2(t)$ which can not control any oscillation 
due to derivative terms. Hence, it is necessary  that  $\opL$ contains enough  dissipative terms (with at least linearized highest derivative terms).
\end{rem}
 
An important fact is that the SAV scheme \eqref{SAV_first} is easy to implement. 
    Indeed, taking \eqref{SAV_first2} and \eqref{SAV_first3} into \eqref{SAV_first1}, we obtain 
\begin{align}
  \frac{\phi^{n+1}-\phi^n}{\Delta t}=&\opG\left[\opL \phi^{n+1}
  +\frac{U[{\phi}^{n}]}{\sqrt{\En_1[{\phi}^{n}]}}
  \left(r^n+\int_\Omega~\frac{U[{\phi}^{n}]}{2\sqrt{\En_1[{\phi}^{n}]}}(\phi^{n+1}-\phi^n)\md\bm{x}\right)\right]. 
\end{align}
Denote 
\begin{align}
b^n=U[{\phi}^{n}]/\sqrt{\En_1[{\phi}^{n}]}. \nonumber
\end{align}
Then the above equation can be written as 
\begin{equation}
  (I-\Delta t \opG\opL)\phi^{n+1}-\frac{\Delta t}{2}\opG b^n(b^n,\phi^{n+1})=\phi^{n}+r^n\opG b^n-\frac{\Delta t}{2}(b^n,\phi^n)\opG b^n. \label{linear1}
\end{equation}
The above equation  can be solved using the Sherman--Morrison--Woodbury formula \cite{Gol.V89}:
\begin{equation}\label{SMW}
 (A+{U}{V}^T)^{-1}=A^{-1}-A^{-1}{U}(I+{V}^TA^{-1}{U})^{-1}{V}^TA^{-1},
\end{equation}
where $A$ is an $n\times n$ matrix,  $U$ and $V$ are $n\times k$ matrices, and $I$ is the $k\times k$ identity matrix. We note that if $k\ll n$ and $A$ can be inverted efficiently, the Sherman--Morrison--Woodbury formula provides an efficient algorithm to invert the perturbed matrix $A+{U}{V}^T$.
The system \eqref{linear1} corresponds to a case with $U$ and $V$ being $n\times 1$  vectors, so it can be efficiently solved by using \eqref{SMW}. For the reader's convenience, we write down explicitly the procedure below. 
Denote the righthand side of \eqref{linear1} by $c^n$.
Multiplying \eqref{linear1} with $(I-\Delta t \opG\opL)^{-1}$, then taking the inner product with $b^n$, we obtain 
\begin{equation}
  (b^n,\phi^{n+1})+\frac{\Delta t}{2}\gamma^n(b^n,\phi^{n+1})=(b^n,(I-\Delta t \opG\opL)^{-1}c^n), 
\end{equation}
where $\gamma^n=-(b^n,(I-\Delta t \opG\opL)^{-1}\opG b^n)=(b^n,(-\opG^{-1}+\Delta t \opL)^{-1} b^n)>0$, if we assume that $\opG$ is negative definite and $\opL$ is non-negative. 
Hence
\begin{equation}\label{bnphi}
  (b^n,\phi^{n+1})=\frac{(b^n,(I-\Delta t \opG\opL)^{-1}c^n)}{1+\Delta t\gamma^n/2}. 
\end{equation}
To summarize, we implement \eqref{SAV_first} as follows:
\begin{enumerate}[(i)]
 \item Compute $b^n$ and $c^n$ (the righthand side of  \eqref{linear1});
 \item Compute $(b^n,\phi^{n+1})$ from \eqref{bnphi};
 \item Compute $\phi^{n+1}$ from   \eqref{linear1}.
\end{enumerate}
Note that in (ii) and (iii) of the above procedure, we only need to solve, twice, a linear equation with constant coefficients of the form 
\begin{equation}\label{common}
  (I-\Delta t \opG\opL)\bar x=\bar b. 
\end{equation}
Therefore, the above procedure is extremely efficient.
In particular,  if $\opL=-\Delta$ and $\opG=-1$ or $-\Delta$, with a tensor-product domain $\Omega$,  fast solvers are available. 
In contrast, the convex splitting schemes usually require solving a nonlinear system,  the IEQ scheme requires solving \eqref{IEQ_linear} which involves variable coefficients. 

A main advantage of the SAV approach (as well as the IEQ approach) is that linear second- or even higher-order energy stable schemes can be easily constructed.
We start by a semi-implicit second-order scheme based on  
 Crank--Nicolson, which we denote as SAV/CN:
 \begin{subequations}\label{second}
\begin{align}
  \frac{\phi^{n+1}-\phi^n}{\Delta t}=&\opG\mu^{n+1/2}, \label{second_1}\\
  \mu^{n+1/2}=&\opL\frac{1}{2}(\phi^{n+1}+\phi^n)
  +\frac{r^{n+1}+r^n}{2\sqrt{\En_1[\bar{\phi}^{n+1/2}]}}
  U[\bar{\phi}^{n+1/2}], \label{second_2}\\
  r^{n+1}-r^n=&\int_\Omega\frac{U[\bar{\phi}^{n+1/2}]}{2\sqrt{\En_1[\bar{\phi}^{n+1/2}]}}
  (\phi^{n+1}-\phi^n)\md\bm{x}. \label{second_3}
\end{align}
\end{subequations}
In the above, $\bar{\phi}^{n+1/2}$ can be any explicit approximation of $\phi(t^{n+1/2})$ with an error of $O(\Delta t^2)$. For instance, we may let 
\begin{equation}
  \bar{\phi}^{n+1/2}=\frac{1}{2}(3\phi^{n}-\phi^{n-1}) 
\end{equation}
be the extrapolation; 
or we can use a simple first-order scheme to obtain it, such as the semi-implicit scheme
\begin{equation}
  \frac{\bar{\phi}^{n+1/2}-\phi^n}{\Delta t/2}=\opG\left(\opL\bar{\phi}^{n+1/2}+U[\phi^n]\right),
\end{equation}
which has a local truncation error of $O(\Delta t^2)$.

Just as in the first-order scheme, one can eliminate $\mu^{n+1}$ and $r^{n+1}$ from the second-order schemes \eqref{second} to obtain a linear equation   for $\phi$ similar to \eqref{linear1}, so it can be solved by  using the Sherman--Morrison--Woodbury formula \eqref{SMW} which only involves two linear equations with constant coefficients of the form \eqref{common}.

Regardless of how we obtain $\bar{\phi}^{n+1/2}$, 
multiplying the three equations with $\mu^{n+1/2}$, $(\phi^{n+1}-\phi^n)/\Delta t$, $(r^{n+1}+r^n)/\Delta t$, we derive the following:
\begin{theorem}
  The scheme \eqref{second} is second-order accurate, unconditionally energy stable in the sense that 
 \begin{align*}
  \frac{1}{\Delta t}\Big[\tilde\En^{n+1}-\tilde\En^n\Big]
    =-(\mu^{n+1/2},\opG \mu^{n+1/2}),\label{energy_discrete_CN}
\end{align*}
where $\tilde\En$ is the modified energy defined in \eqref{modE}, and one can obtain $(\phi^{n+1},\mu^{n+1}, r^{n+1})$ by solving two linear equations with constant coefficients of the form \eqref{common}.

\end{theorem}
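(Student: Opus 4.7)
The plan is to mimic the first-order energy estimate already carried out in the excerpt, only now with Crank--Nicolson midpoint identities replacing the telescoping increment identities. First I would take the inner product of \eqref{second_1} with $\mu^{n+1/2}$, which produces the dissipation term $(\mu^{n+1/2},\opG\mu^{n+1/2})$ on the right-hand side and $((\phi^{n+1}-\phi^n)/\Delta t,\mu^{n+1/2})$ on the left. Next I take the inner product of \eqref{second_2} with $(\phi^{n+1}-\phi^n)/\Delta t$, which rewrites this same quantity as a sum of two pieces: a linear piece involving $\opL\tfrac12(\phi^{n+1}+\phi^n)$, and a nonlinear piece involving $\frac{r^{n+1}+r^n}{2\sqrt{\En_1[\bar\phi^{n+1/2}]}}U[\bar\phi^{n+1/2}]$.

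For the linear piece, I would use the symmetry of $\opL$ together with the Crank--Nicolson identity
\begin{equation*}
\bigl(\tfrac12(\phi^{n+1}+\phi^n),\,\opL(\phi^{n+1}-\phi^n)\bigr)=\tfrac12\bigl[(\phi^{n+1},\opL\phi^{n+1})-(\phi^n,\opL\phi^n)\bigr],
\end{equation*}
which contributes exactly $\tfrac{1}{2\Delta t}[(\phi^{n+1},\opL\phi^{n+1})-(\phi^n,\opL\phi^n)]$ to the energy increment. For the nonlinear piece, I would multiply \eqref{second_3} through by $(r^{n+1}+r^n)/\Delta t$ and exploit $(r^{n+1}-r^n)(r^{n+1}+r^n)=(r^{n+1})^2-(r^n)^2$; this turns the nonlinear piece into the clean telescoping term $((r^{n+1})^2-(r^n)^2)/\Delta t$. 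Adding everything up yields precisely the claimed dissipation identity
\begin{equation*}
\frac{1}{\Delta t}\bigl[\tilde\En^{n+1}-\tilde\En^{n}\bigr]=(\mu^{n+1/2},\opG\mu^{n+1/2}),
\end{equation*}
with $\tilde\En$ given by \eqref{modE}. Since $\opG$ is non-positive, unconditional stability follows; no smallness condition on $\Delta t$ enters anywhere, and the step is completely independent of how $\bar\phi^{n+1/2}$ is computed, which is the crux of why the nonlinear extrapolation does not spoil stability.

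For second-order accuracy I would just invoke the standard Crank--Nicolson consistency analysis: the midpoint evaluations of \eqref{second_1} and \eqref{second_2} are $O(\Delta t^2)$ consistent with \eqref{SAVeq_mu}--\eqref{SAVeq_phi}, and \eqref{second_3} is the $O(\Delta t^2)$ midpoint discretization of \eqref{SAVeq_r}; the extrapolation $\bar\phi^{n+1/2}$ is also $O(\Delta t^2)$, and the nonlinear factor $U[\bar\phi^{n+1/2}]/\sqrt{\En_1[\bar\phi^{n+1/2}]}$ is smooth in its argument, so the local truncation error stays $O(\Delta t^2)$.

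For the implementation claim I would substitute \eqref{second_3} into \eqref{second_2} to eliminate $r^{n+1}$, and then substitute the resulting expression for $\mu^{n+1/2}$ into \eqref{second_1}. This produces a single equation for $\phi^{n+1}$ of rank-one perturbed form
\begin{equation*}
\bigl(I-\tfrac{\Delta t}{2}\opG\opL\bigr)\phi^{n+1}-\tfrac{\Delta t}{4}\,\opG b^n (b^n,\phi^{n+1})=d^n,
\end{equation*}
with $b^n=U[\bar\phi^{n+1/2}]/\sqrt{\En_1[\bar\phi^{n+1/2}]}$ and $d^n$ built from known data at step $n$. This is structurally the same $A+UV^{T}$ perturbation as in the first-order case, so the Sherman--Morrison--Woodbury formula \eqref{SMW} lets us recover $\phi^{n+1}$ from exactly two solves of the constant-coefficient system \eqref{common} (with operator $I-\tfrac{\Delta t}{2}\opG\opL$); $\mu^{n+1}$ and $r^{n+1}$ then follow algebraically. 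The main bookkeeping obstacle, as I see it, is ensuring that the nonlinear contribution from \eqref{second_2} exactly cancels the right-hand side of the $(r^{n+1}+r^n)/\Delta t$-weighted \eqref{second_3}; this cancellation, which relies on the symmetric role played by the factor $U[\bar\phi^{n+1/2}]/(2\sqrt{\En_1[\bar\phi^{n+1/2}]})$ in both equations, is the whole point of introducing the scalar auxiliary variable in this particular form.
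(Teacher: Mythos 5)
Your proposal is correct and follows essentially the same route as the paper: test the three equations with $\mu^{n+1/2}$, $(\phi^{n+1}-\phi^n)/\Delta t$ and $(r^{n+1}+r^n)/\Delta t$, use the Crank--Nicolson and difference-of-squares identities, and eliminate $r^{n+1}$ to get a rank-one perturbation of $I-\tfrac{\Delta t}{2}\opG\opL$ solvable via Sherman--Morrison--Woodbury, exactly as in the first-order case. Note that your sign, $\frac{1}{\Delta t}[\tilde\En^{n+1}-\tilde\En^{n}]=(\mu^{n+1/2},\opG\mu^{n+1/2})\le 0$, is the correct one (consistent with the continuous law and the first-order and BDF2 statements); the minus sign in the theorem as printed appears to be a typo.
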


We can also construct semi-implicit  second-order scheme based on BDF formula, which we denote as SAV/BDF:
 \begin{subequations}\label{second_BDF}
\begin{align}
  \frac{3\phi^{n+1}-4\phi^n+\phi^{n-1}}{2\Delta t}=&\opG\mu^{n+1}, \label{second_BDF_1}\\
  \mu^{n+1}=&\opL \phi^{n+1}
  +\frac{r^{n+1}}{\sqrt{\En_1[\bar{\phi}^{n+1}]}}
  U[\bar{\phi}^{n+1}], \label{second_BDF_2}\\
  3r^{n+1}-4r^n+r^{n-1}=&\int_\Omega\frac{U[\bar{\phi}^{n+1}]}{2\sqrt{\En_1[\bar{\phi}^{n+1}]}}
  (3\phi^{n+1}-4\phi^n+\phi^{n-1})\md\bm{x}. \label{second_BDF_3}
\end{align}
\end{subequations}
Here, $\bar{\phi}^{n+1}$ can be any explicit approximation of $\phi(t^{n+1})$ with an error of $O(\Delta t^2)$. 
Multiplying the three equations with $\mu^{n+1}$, $(3\phi^{n+1}-4\phi^n+\phi^{n-1})/\Delta t$, $r^{n+1}/\Delta t$ and integrating the first two equations, and using the identity:
\begin{equation}
\begin{split}
 2(a^{k+1},3a^{k+1}-4a^k+a^{k-1})=&|a^{k+1}|^2+ |2a^{k+1}-a^k|^2+|a^{k+1}-2a^k+a^{k-1}|^2\\
 &-|a^k|^2-|2a^k-a^{k-1}|^2,
 \end{split}
\end{equation}
we obtain the following:
\begin{theorem}
The scheme \eqref{second_BDF} is  second-order accurate, unconditionally energy stable in the sense that 
\begin{align}
  \frac{1}{\Delta t}&\Big\{\tilde \En[(\phi^{n+1}, r^{n+1}), (\phi^n,r^n)]-
  \tilde \En[(\phi^{n}, r^{n}), (\phi^{n-1},r^{n-1})]\big\}
    \nonumber\\
    &+ \frac{1}{\Delta t}\Big\{\frac{1}{4}\big(\phi^{n+1}-2\phi^n+\phi^{n-1},\opL (\phi^{n+1}-2\phi^n+\phi^{n-1})\big)\nonumber\\
    &+\frac{1}{2}(r^{n+1}-2r^n+r^{n-1})^2\Big\}=(\mu^{n+1},\opG\mu^{n+1}),\nonumber
\end{align}
where the modified discrete energy is defined as
\begin{equation*}\label{modE_BDF}
\begin{split}
 \tilde \En[(\phi^{n+1}, r^{n+1}), (\phi^n,r^n)]=&\frac{1}{4}\Big( (\phi^{n+1},\opL \phi^{n+1})+\big(2\phi^{n+1}-\phi^n,\opL(2\phi^{n+1}-\phi^n)\big)\Big)\\
 &+\frac{1}{2}\big((r^{n+1})^2+(2r^{n+1}-r^n)^2\big), 
 \end{split}
\end{equation*}
and one can obtain $(\phi^{n+1},\mu^{n+1}, r^{n+1})$ by solving two linear equations with constant coefficients of the form \eqref{common}.
\end{theorem}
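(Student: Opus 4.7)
The plan is to verify the three claims of the theorem in sequence: second-order consistency, unconditional energy stability with the prescribed modified energy, and reducibility to two constant-coefficient linear solves. Second-order accuracy follows immediately from the observation that the BDF2 stencil $(3\phi^{n+1}-4\phi^n+\phi^{n-1})/(2\Delta t)$ is a standard second-order approximation of $\phi_t(t^{n+1})$, and the extrapolation $\bar{\phi}^{n+1}$ is $O(\Delta t^2)$ by hypothesis, so that the nonlinear coefficient $U[\bar{\phi}^{n+1}]/\sqrt{\En_1[\bar{\phi}^{n+1}]}$ also approximates its continuous counterpart at $t^{n+1}$ to second order. Substituting the exact solution into the three equations then produces a truncation residual of $O(\Delta t^2)$.

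For unconditional energy stability, I would use exactly the three test functions indicated before the statement: test \eqref{second_BDF_1} with $\mu^{n+1}$ and integrate, test \eqref{second_BDF_2} with $(3\phi^{n+1}-4\phi^n+\phi^{n-1})/(2\Delta t)$ and integrate, and multiply \eqref{second_BDF_3} by $r^{n+1}/\Delta t$. The crucial observation is that the nonlinear coupling term produced by the second test,
\begin{equation*}
\frac{r^{n+1}}{2\Delta t\sqrt{\En_1[\bar{\phi}^{n+1}]}}\int_\Omega U[\bar{\phi}^{n+1}]\bigl(3\phi^{n+1}-4\phi^n+\phi^{n-1}\bigr)\,\md\bm{x},
\end{equation*}
equals $r^{n+1}(3r^{n+1}-4r^n+r^{n-1})/\Delta t$ by \eqref{second_BDF_3}, so that it cancels exactly against the left-hand side of the third identity when the two identities are subtracted. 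This precise choice of multiplier $r^{n+1}/\Delta t$ (rather than e.g.\ $2r^{n+1}/\Delta t$) is what makes the scalar and functional pieces balance. What remains is the dissipation $(\mu^{n+1},\opG\mu^{n+1})\le 0$ on the right, and on the left the two BDF2 combinations $(\opL\phi^{n+1},3\phi^{n+1}-4\phi^n+\phi^{n-1})/(2\Delta t)$ and $r^{n+1}(3r^{n+1}-4r^n+r^{n-1})/\Delta t$. Applying the telescoping identity stated in the excerpt --- first in the symmetric bilinear form $(\cdot,\opL\cdot)$, which is legitimate because $\opL$ is symmetric non-negative, and then in the scalar case --- converts these into the difference of $\tilde{\En}[(\phi^{n+1},r^{n+1}),(\phi^n,r^n)]$ and $\tilde{\En}[(\phi^n,r^n),(\phi^{n-1},r^{n-1})]$ divided by $\Delta t$, together with the two non-negative penalty terms $\tfrac{1}{4\Delta t}\bigl(\phi^{n+1}-2\phi^n+\phi^{n-1},\opL(\phi^{n+1}-2\phi^n+\phi^{n-1})\bigr)$ and $\tfrac{1}{2\Delta t}(r^{n+1}-2r^n+r^{n-1})^2$, yielding exactly the identity in the statement with no restriction on $\Delta t$.

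For the reducibility claim, I would mimic the elimination carried out in the first-order case. Substituting \eqref{second_BDF_2} into \eqref{second_BDF_1}, and using \eqref{second_BDF_3} to express $r^{n+1}$ linearly in terms of $(b^{n+1},\phi^{n+1})$ with $b^{n+1}=U[\bar{\phi}^{n+1}]/\sqrt{\En_1[\bar{\phi}^{n+1}]}$, produces a rank-one perturbation of $I-\tfrac{2\Delta t}{3}\opG\opL$ acting on $\phi^{n+1}$, strictly analogous to \eqref{linear1}. The Sherman--Morrison--Woodbury formula \eqref{SMW} then reduces the solve to two applications of $(I-\tfrac{2\Delta t}{3}\opG\opL)^{-1}$, i.e.\ two solves of the canonical form \eqref{common}, after which $\mu^{n+1}$ and $r^{n+1}$ are recovered algebraically from \eqref{second_BDF_2}--\eqref{second_BDF_3}. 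The main obstacle I anticipate is the energy step: correctly bookkeeping the factors of $\Delta t$ so that the nonlinear term cancels exactly with the coefficient indicated, and then legitimately applying the scalar BDF2 identity to the operator-valued bilinear form, which forces the hypothesis that $\opL$ be symmetric and non-negative. Once these are in place, the remaining manipulations are essentially algebraic telescoping.
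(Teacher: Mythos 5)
Your proposal is correct and follows essentially the same route as the paper: test the three equations with $\mu^{n+1}$, the BDF2 difference quotient, and $r^{n+1}/\Delta t$, use the third equation to convert the nonlinear coupling term into $r^{n+1}(3r^{n+1}-4r^n+r^{n-1})/\Delta t$, and then apply the quoted BDF2 telescoping identity both in the $(\cdot,\opL\cdot)$ bilinear form and in the scalar case, with the Sherman--Morrison--Woodbury elimination handling the implementation claim exactly as in the first-order case. The factor bookkeeping in your multipliers is consistent and yields the stated identity directly.
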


We observe that the modified energy $\tilde \En[(\phi^{n+1}, r^{n+1}, (\phi^n,r^n)]$ is an approximation of the original energy $\En(\phi^{n+1})$ if $(r^{n+1})^2$ is an approximation of $\En_1(\phi^{n+1})$.

\subsection{Gradient flows of multiple functions}
We describe below the SAV approach for gradient flows of multiple functions $\phi_1,\ldots,\phi_k$:
\begin{align}
  \En[\phi_1,\ldots,\phi_k]=\sum_{i,j=1}^k d_{ij}(\phi_i,\opL\phi_j)+\En_1[\phi_1,\ldots,\phi_k], 
\end{align}
where $\opL$ is a self-adjoint non-negative linear operator, the constant matrix $A=(d_{ij})$ is symmetric positive definite. 
Also we assume that $\En_1\ge C_1>0$. 
A simple case with decoupled linear terms, i.e. $d_{ij}=\delta_{ij}$, is considered in \cite{SXY17}. However, some applications  (cf. for example \cite{Chen02,Boy.M14,Dong14})  involve coupled linear operators  which render the problem very difficult to solve numerically by existing methods.

Denote $U_i=\delta\En_1/\delta\phi_i$, and 
 introduce $r(t)=\sqrt{\En_1}$ as the scalar auxiliary variable. 
 With a dissipation operator $\opG$ for $\phi_i$, the gradient flow is then given by 
\begin{subequations}
\begin{align}
  \frac{\partial\phi_i}{\partial t}=&\opG\mu_i, \\
  \mu_i=&\sum_{j=1}^kd_{ij}\opL\phi_j+\frac{r}{\sqrt{\En_1}}U_i,\\
  r_t=&\frac{1}{2\sqrt{\En_1}}\int_\Omega  U_i\frac{\partial\phi_i}{\partial t} \md\bm{x}.
\end{align} 
\end{subequations}
Taking the inner products of the above three equations with $\mu_i$, $ \frac{\partial\phi_i}{\partial t}$ and $2r$, summing over $i$ and using the facts that $\opL$ is self-adjoint and $d_{ij}=d_{ji}$, we obtain
the energy law:
\begin{equation}
 \frac d{dt}\En[\phi_1,\ldots,\phi_k]=\sum_{i=1}^k (\opG\mu_i,\mu_i).
\end{equation}
Consider, for example, the following  second-order SAV/CN scheme:
\begin{subequations}\label{Msecond} 
\begin{align}
  \frac{\phi_i^{n+1}-\phi_i^n}{\Delta t}=&\opG\mu_i^{n+1/2}, \label{Msecond_1}\\
  \mu_i^{n+1/2}=&\frac 12\sum_{j=1}^kd_{ij}\opL(\phi_j^{n+1}+\phi_j^n)
  +\frac{ U_i[{\bar\phi}_1^{n+1/2},\cdots,{\bar\phi}_k^{n+1/2}]}{2\sqrt{\En_1[\bar{\phi}_1^{n+1/2},\cdots,{\bar\phi}_k^{n+1/2}]}}
 (r^{n+1}+r^n), \label{Msecond_2}\\
  r^{n+1}-r^n=&\int_\Omega\sum_{j=1}^k\frac{U_j[\bar{\phi}_1^{n+1/2},\cdots,\bar{\phi}_k^{n+1/2}]}{2\sqrt{\En_1[\bar{\phi}_1^{n+1/2},\cdots,\bar{\phi}_k^{n+1/2}]}}
  (\phi_j^{n+1}-\phi_j^n)\md\bm{x}, \label{Msecond_3}
\end{align}
\end{subequations}
where $\bar{\phi}_j^{n+1/2}$ can be any second-order explicit approximation of  $\phi(t^{n+1/2})$.
We Multiply the above three equations with $\Delta t\mu_i^{n+1/2}$, $\phi_i^{n+1}-\phi_i^n$, $r^{n+1}+r^n$ and take the sum over $i$. Since $\opL$ is self-adjoint and $d_{ij}=d_{ji}$, we have
\begin{equation*}
 \begin{split}
  \frac 12 \big(\sum_{j=1}^kd_{ij}\opL(\phi_j^{n+1}+\phi_j^n),\phi_i^{n+1}-\phi_i^n\big)
=\frac 12 \sum_{j=1}^kd_{ij}[(\opL\phi_j^{n+1},\phi_i^{n+1})-
 (\opL\phi_j^{n},\phi_i^{n})],
 \end{split}
\end{equation*}
which immediately leads to energy stability. Next, we describe how to implement \eqref{Msecond} efficiently. 

 Denote 
$$
p_i^n=\frac{U_i[\bar{\phi}_1^{n+1/2},\cdots,\bar{\phi}_k^{n+1/2}]}{\sqrt{\En_1[\bar{\phi}_1^{n+1/2},\cdots,\bar{\phi}_k^{n+1/2}]}}, $$
and substitute \eqref{Msecond_2} and \eqref{Msecond_3} into \eqref{Msecond_1}, we can eliminate $\mu_i^{n+1/2} and $ $r^{n+1}$ to obtain a coupled linear  system of $k$ equations of the following form
\begin{align}\label{mlinear}
  \phi_i^{n+1}-\frac{\Delta t}2 \sum_{j=1}^k d_{ij} \opG \opL
  \phi_j^{n+1}-\frac 12\sum_{j=1}^k(\phi_j^{n+1},p_j^n)\opG p_i^n=b_i^n,\quad i=1,\cdots, k,
\end{align}
where $b_i^n$ includes all known terms in the previous  time steps. Let 
us denote 
\begin{equation}
\begin{split}
&\bar\phi^{n+1}=(\phi_1^{n+1},\cdots, \phi_k^{n+1})^T,\quad
\bar b^n=(b_1^n,\cdots,b_k^n)^T, \\
&\bar u=\frac 12(\opG p_1^n,\cdots,\opG p_k^n),\quad
\bar v=(p_1^n,\cdots,p_k^n).
\end{split}
\end{equation}
Let us denote   $D=(\frac{\Delta t}2d_{ij})_{i,j=1,\cdots,k}.$
 The above system can be written in the following matrix form:
\begin{equation}
 ({\cal A}+\bar u \bar v^T)\bar\phi^{n+1}=\bar b^n, 
\end{equation}
where the operator ${\cal A}$ is defined by
\begin{equation}
 {\cal A}\bar\phi^{n+1}=\bar\phi^{n+1}-\opG \opL D\bar\phi^{n+1}.
\end{equation}
  Hence, by using \eqref{SMW}, the solution of \eqref{mlinear} can be obtained by solving two linear systems of the form  ${\cal A}\bar \phi=\bar b$.
 
It remains to describe how to solve the linear system ${\cal A}\bar \phi=\bar b$ efficiently. 
Since $D$ is symmetric positive definite, we can compute an orthonormal eigen-matrix $E$ such that $D=E\Lambda E^T$ where $\Lambda=\text{diag}(\lambda_1,\cdots, \lambda_k)$. Setting $\bar \psi=E^T \bar\phi$, we  have 
$${\cal A}\bar\phi=\bar\phi-\opG \opL E\Lambda E^T \bar\phi= E(I-\opG \opL \Lambda)\bar \psi.$$
Hence, ${\cal A}\bar \phi=\bar b$  decouples into a sequence of elliptic equations:
\begin{equation}\label{phii}
 \psi_i-\lambda_i \opG \opL \psi_i=(E^T\bar b)_i, \quad i=1,\cdots,k.
\end{equation}
 To summarize,  ${\cal A}\bar \phi=\bar b$ can be efficiently solved as follows:
\begin{itemize}
\item Compute the eigen-pair $(E,\Lambda)$;
\item Compute $E^T\bar b$;
\item Solve the decoupled equations  \eqref{phii};
\item Finally, the solution is: $\bar\phi=E\bar\psi$.
\end{itemize}
In summary:
\begin{theorem}
The scheme \eqref{Msecond} is second-order accurate, unconditionally energy stable in the sense that  
\begin{equation*}
\begin{split}
 \frac 1{2\Delta t}& \big[\sum_{j=1}^kd_{ij}(\opL\phi_j^{n+1},\phi_i^{n+1})+(r^{n+1})^2\big]-
 \frac 1{2\Delta t} \big[\sum_{j=1}^kd_{ij}(\opL\phi_j^{n},\phi_i^{n})+(r^n)^2\big]=\sum_{i=1}^k (\opG\mu_i,\mu_i),
 \end{split}
\end{equation*}
and one can obtain $r^{n+1}$  and $(\phi_j^{n+1},\mu_j^{n+1})_{1\le j\le k}$  by solving two sequences of decoupled linear equations with constant coefficients of the form \eqref{phii}.
 \end{theorem}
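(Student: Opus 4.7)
The plan is to mirror the CN-energy proof already carried out for the single-function case in \eqref{second}, adapting it to accommodate the coupling matrix $(d_{ij})$, and then to reuse the Sherman--Morrison--Woodbury reduction together with the diagonalization of $D$ that has been described just before the statement.

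\textbf{Step 1 (energy identity).} I take the inner product of \eqref{Msecond_1} with $\Delta t\,\mu_i^{n+1/2}$, and of \eqref{Msecond_2} with $\phi_i^{n+1}-\phi_i^n$, then sum the resulting identities over $i=1,\dots,k$. Finally I multiply \eqref{Msecond_3} by $r^{n+1}+r^n$. On the right-hand side of the sum coming from \eqref{Msecond_1} I get $\Delta t\sum_i(\opG\mu_i^{n+1/2},\mu_i^{n+1/2})$. The key algebraic identity is
\begin{equation*}
\tfrac12\Bigl(\sum_{j=1}^k d_{ij}\,\opL(\phi_j^{n+1}+\phi_j^n),\,\phi_i^{n+1}-\phi_i^n\Bigr)
=\tfrac12\sum_{j=1}^k d_{ij}\bigl[(\opL\phi_j^{n+1},\phi_i^{n+1})-(\opL\phi_j^n,\phi_i^n)\bigr],
\end{equation*}
where I use that $\opL$ is self-adjoint and that $d_{ij}=d_{ji}$ to symmetrize the cross terms; this is precisely the telescoping that produces the quadratic form in the claimed modified energy. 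The nonlinear part telescopes through the auxiliary variable: using \eqref{Msecond_3} the contribution
\begin{equation*}
\frac{r^{n+1}+r^n}{2\sqrt{\En_1[\bar{\phi}^{n+1/2}]}}\sum_j\bigl(U_j[\bar{\phi}^{n+1/2}],\phi_j^{n+1}-\phi_j^n\bigr)
\end{equation*}
(arising from pairing \eqref{Msecond_2} with $\phi_i^{n+1}-\phi_i^n$) exactly equals $(r^{n+1}+r^n)(r^{n+1}-r^n)=(r^{n+1})^2-(r^n)^2$. Adding everything together and dividing by $\Delta t$ yields the claimed dissipation identity, which immediately implies unconditional energy stability since $\opG$ is non-positive.

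\textbf{Step 2 (second-order consistency).} This is standard: all operators $\opL$ and $\opG$ act at the midpoint level, $\bar{\phi}_j^{n+1/2}$ is a second-order explicit extrapolation of $\phi_j(t^{n+1/2})$ by assumption, and the auxiliary variable update \eqref{Msecond_3} is a midpoint rule. A Taylor expansion around $t^{n+1/2}$ of each of \eqref{Msecond_1}--\eqref{Msecond_3} shows the local truncation error is $O(\Delta t^2)$. Nothing specific to the multi-component case arises here, so I only sketch it.

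\textbf{Step 3 (efficient solvability).} Eliminating $\mu_i^{n+1/2}$ and $r^{n+1}$ from \eqref{Msecond_2}--\eqref{Msecond_3} gives precisely the coupled linear system \eqref{mlinear}, i.e.\ $(\mathcal{A}+\bar u\bar v^T)\bar\phi^{n+1}=\bar b^n$. Applying the Sherman--Morrison--Woodbury formula \eqref{SMW} reduces the problem to two solves with $\mathcal{A}$. Because $D=\tfrac{\Delta t}{2}(d_{ij})$ is symmetric positive definite, the orthogonal diagonalization $D=E\Lambda E^T$ decouples $\mathcal{A}\bar\phi=\bar b$ into $k$ independent scalar elliptic problems of the form \eqref{phii}, each of the generic type \eqref{common}. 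Once $\phi_i^{n+1}$ is known, $r^{n+1}$ follows from \eqref{Msecond_3} and $\mu_i^{n+1/2}$ from \eqref{Msecond_2}, completing the update.

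The main obstacle I anticipate is bookkeeping in Step 1: one has to be careful in the cross-term manipulation to exploit both the symmetry of $(d_{ij})$ and the self-adjointness of $\opL$ simultaneously, and to match the nonlinear contributions from \eqref{Msecond_2} with the $r$-telescope from \eqref{Msecond_3}; once those two symmetries are used correctly, every other term cancels cleanly and the rest of the proof is essentially mechanical.
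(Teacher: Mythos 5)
Your proposal is correct and follows essentially the same route as the paper: the same test functions $\Delta t\,\mu_i^{n+1/2}$, $\phi_i^{n+1}-\phi_i^n$, $r^{n+1}+r^n$, the same symmetrization of the cross terms using $d_{ij}=d_{ji}$ and the self-adjointness of $\opL$, and the same reduction to \eqref{mlinear}, Sherman--Morrison--Woodbury, and diagonalization of $D$ for the solvability claim. No gaps.
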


\subsection{Full discretization}
To simplify the presentation,  we have only discussed the time discretization in the above. However, since  the stability proofs of SAV schemes are all variational,  they can be straightforwardly extended to full discrete SAV schemes with Galerkin finite element methods or Garlerkin spectral methods or even finite difference methods with summation by parts.

\section{Numerical validation\label{Num}}
In this section, we apply the SAV/CN and SAV/BDF schemes to several gradient flows
to demonstrate the efficiency and accuracy  of the SAV approach. In all  examples, we assume  periodic boundary conditions and use a Fourier-spectral method for space variables.

\subsection{Allen-Cahn,  Cahn-Hilliard and fractional Cahn-Hilliard  equations} 
The Allen-Cahn \cite{allen1979microscopic} and Cahn-Hilliard equations \cite{cahn1958free,cahn1959free}, 
are widely used in the study of interfacial dynamics \cite{allen1979microscopic, rogers1989numerical,anderson1998diffuse,gurtin1996two,kim2012phase,liu2003phase,lowengrub1998quasi,yue2004diffuse,Ain.M17}. 
They are built with the free energy
\begin{equation}
  \En[\phi]=\frac{1}{2}\int\left( |\nabla \phi|^2+\frac{1}{4\epsilon^2}(1-\phi^2)^2\right) \md \bm{x}. \label{energy_interface}
\end{equation}
We consider the $H^{-s}$ gradient flow, which leads to the fractional Cahn--Hilliard equation: 
\begin{equation}
  \frac{\partial \phi}{\partial t}=-\gamma(-\Delta)^{s}(-\Delta \phi-\frac{1}{\epsilon^2}\phi(1-\phi^2)),\quad 0\le s\le 1. \label{BinMix}
\end{equation}
Here, the fractional Laplacian operator $(-\Delta)^{s}$ is defined via Fourier expansion. More precisely, if $\Omega=(0,2\pi)^2$, then 
we can express $u\in L^2(\Omega)$  as 
$$
u=\sum_{man}\hat{u}_{mn}e^{imx+iny},
$$
so the fractional Laplacian is defined as
$$
(-\Delta)^{s}u=\sum(m^2+n^2)^s\hat{u}_{mn}e^{imx+iny},
$$
When $s=0$ ($L^2$ gradient flow), \eqref{BinMix} is the standard Allen--Cahn equation; when $s=1$, it becomes the standard Cahn--Hilliard equation. 

To apply our schemes \eqref{second} or \eqref{second_BDF}  to \eqref{BinMix}, we specify the operators $\opL$, $\opG$ and the energy $\En_1$ as 
\begin{equation}
  \opL=-\Delta+\frac{\beta}{\epsilon^2},\ \opG=-(-\Delta)^s,\ \En_1=\frac{1}{4\epsilon^2}\int_\Omega(\phi^2-1-\beta)^2 \md\bm{x}, \label{BMixOp}
\end{equation}
where $\beta$ is a positive number to be chosen. Then we have 
$$
U[\phi]=\frac{\delta \En_1}{\delta \phi}=\frac{1}{\epsilon^2}\phi(\phi^2-1-\beta). 
$$

\begin{example}\label{ex1}
(Convergence rate of SAV/CN scheme for the standard Cahn-Hilliard equation) 
We choose the computational domain as $[0,2\pi]^2$, $\epsilon=0.1$, and $\gamma=1$. 
The initial data is chosen as smooth one $u_0(x,y)=0.05\sin(x)\sin(y)$.
\end{example}

We use the Fourier Galerkin method for spatial discretization with $N=2^7$, and choose $\beta=1$. 
To compute a reference solution, we use the fourth-order exponential time differencing Runge-Kutta method (ETDRK4)\footnote{Although ETDRK4 has higher order of accuracy, it does not guarantees energy stability, and the implementation can be difficult since it requires to calculate matrix exponential.} \cite{cox2002exponential} with $\Delta t$ sufficiently small. 
The numerical errors at $t=0.032$ for SAV/CN and SAV/BDF are shown in TABLE \ref{tab1}, where we can observe the second-order convergence for both schemes. 

\begin{table}
  \centering
  \begin{tabular}{|c|c|c|c|c|c|c|}    \hline
   Scheme& &$\Delta t$=1.6e-4 &$\Delta t$=8e-5 &$\Delta t$=4e-5 &$\Delta t$=2e-5&$\Delta t$=1e-5\\
    \hline
    \multirow{2}{*}{SAV/CN}
    &Error & 1.74e-7 & 4.54e-8 & 1.17e-8 & 2.94e-9 &2.01e-10\\
     \cline{2-7}
    &Rate & - & 1.93 & 1.96 & 1.99 & 2.01\\
    \hline
    \multirow{2}{*}{SAV/BDF}
    &Error & 1.38e-6 & 3.72e-7 & 9.63e-8 & 2.43e-8 & 5.98e-9\\
     \cline{2-7}
    &Rate & - & 1.89 & 1.95 & 1.99  & 2.02\\
    \hline
  \end{tabular}
  \caption{(Example \ref{ex1}) Errors and convergence rates of SAV/CN and SAV/BDF  scheme for the Cahn--Hilliard equation.}\label{tab1}
\end{table}

\begin{example}\label{ex1_3}
We solve a benchmark problem for the Allen--Cahn equation (see \cite{Chen1998Applications}). 
Consider a two-dimensional domain $(-128,128)^2$ with  a circle of radius $R_0=100$. 
In other words, the initial condition is given by 
\begin{equation}
  \phi(x,y,0)=\left\{
  \begin{array}{ll}
    1,& x^2+y^2<100^2, \\
    0,& x^2+y^2\ge 100^2. 
  \end{array}
  \right.
\end{equation}
By mapping the domain to $(-1,1)^2$, the parameters in the Allen-Cahn  equation are given by 
$\gamma=6.10351\times 10^{-5}$ and $\epsilon=0.0078$. 
\end{example}
In the sharp interface limit ($\epsilon\to 0$, which is suitable because the chosen $\epsilon$ is small), the radius at the time $t$ is given by 
\begin{equation}
  R=\sqrt{R_0^2-2t}.
\end{equation}
We use the Fourier Galerkin method to express $\phi$ as 
\begin{equation}
\phi=\sum_{n_1,n_2\le N}\hat{\phi}_{n_1n_2}e^{i\pi (n_1x+n_2y)}, 
\end{equation}
with $N=2^9$. 
We choose $\beta=0.1$ and let the time step $\Delta t$ vary. 
The computed radius $R(t)$ using the  SAV/CN scheme is plotted in FIG. \ref{radius}. 
We observe that $R(t)$ keeps monotonously decreasing and very close to the sharp interface limit value, even when we choose a relatively large $\Delta t$. 
In \cite{shen2010numerical} this benchmark problem is solved using different stabilization methods. Our result proves to be much better than the result in that work, where the oscillation around the limit value is apparent, even if the time step has been reduced to $\Delta t=10^{-3}$. 
We also plot the original energy and the modified energy $(\phi^n, \opL\phi^n)+ (r^n)^2$ in FIG. \ref{radius} for $\Delta t=0.5$, and find that they are very close. 
\begin{figure}
\centering
\includegraphics[width=.49\textwidth,keepaspectratio]{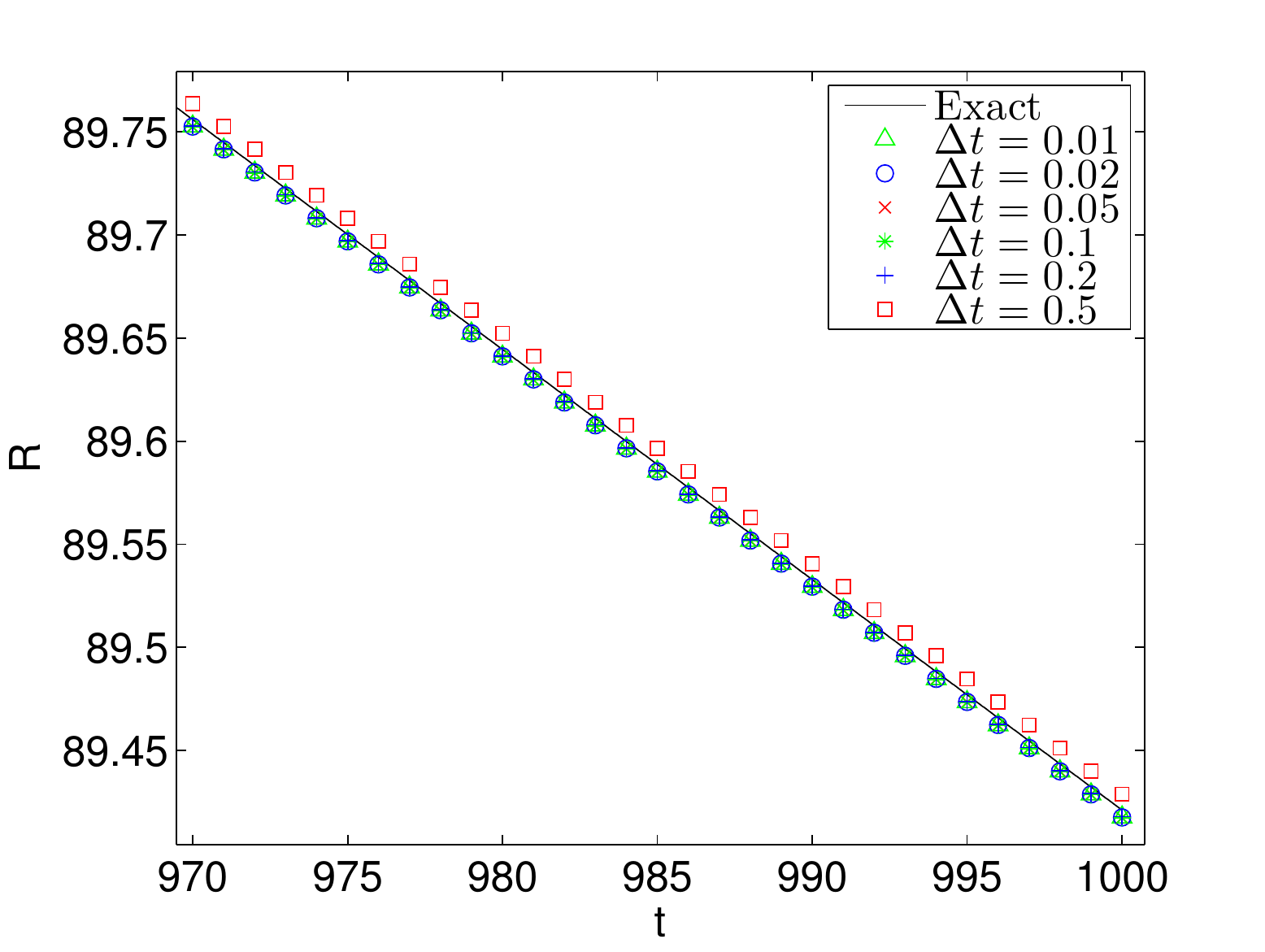}
\includegraphics[width=.49\textwidth,keepaspectratio]{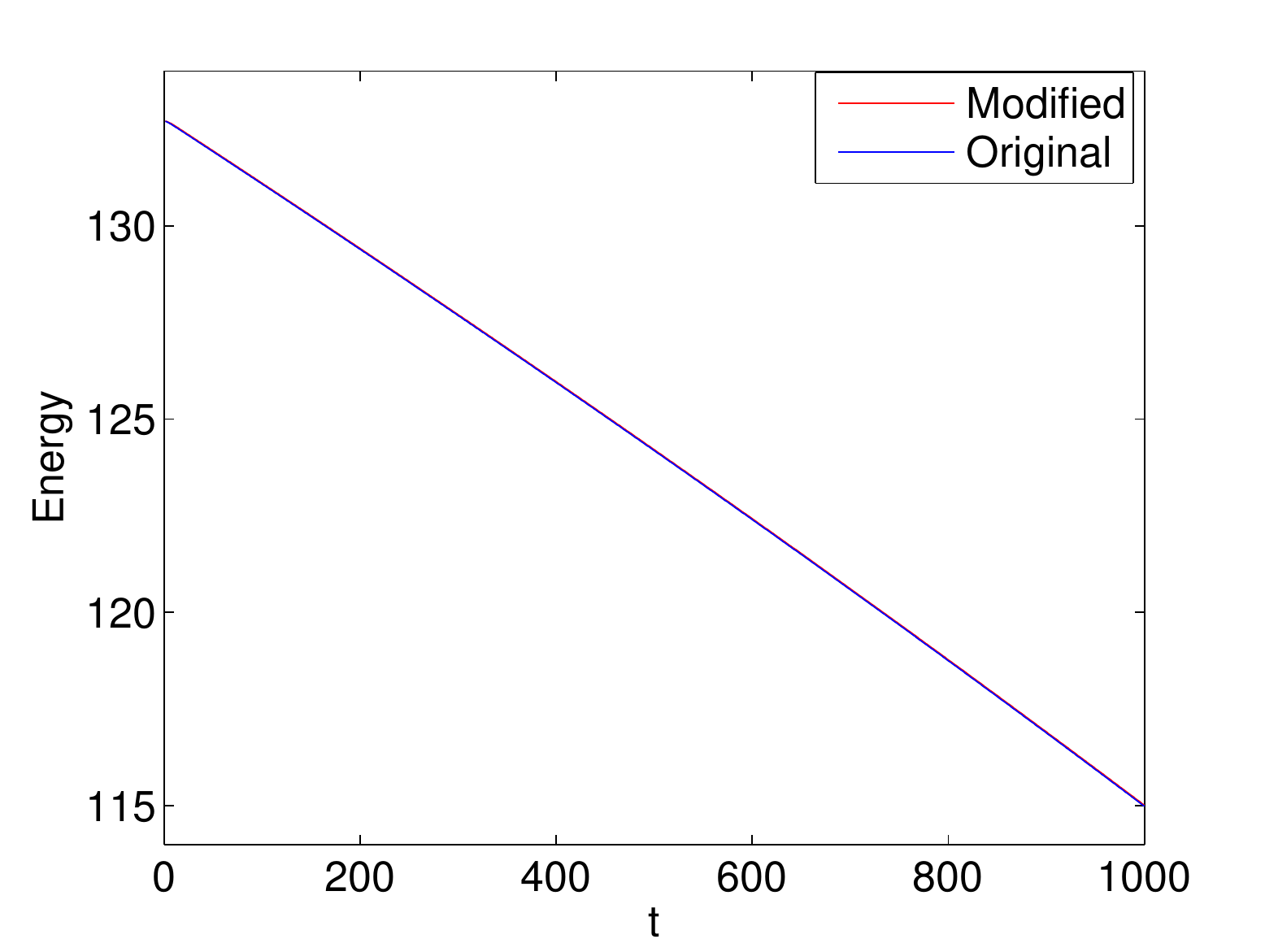}
\caption{(Example \ref{ex1_3}) The evolution of radius $R(t)$ and the free energy (both original and modified). For the free energy, $\Delta t=0.5$.}
\label{radius}
\end{figure}

\begin{example}\label{ex1_2}
(Comparison of SAV/CN and IEQ/CN schemes for the Allen--Cahn equation in 1D) 
The parameters are the same as the first example. 
The domain is chosen as $[0,2\pi]$, discretized by the finite difference method with $N=2^{10}$. 
The initial condition $\phi(x,0)$ is now a randomly generated function. The reference solution is also obtained using ETDRK4. 
\end{example}

We plot the numerical results at $T=0.1$ and $T=1$ by  SAV/CN and IEQ/CN  schemes in FIG. \ref{accomp}.   We used two different time steps $\Delta t=10^{-4},\ 10^{-3}$.  We observe that with $\Delta t=10^{-4}$, 
both SAV/CN scheme and IEQ/CN scheme agree  well with the reference solution. 
However, with $\Delta t=10^{-3}$, the solution by SAV/CN scheme still
agree well with the reference solution at both
$T=0.1$ and $T=1$, while
the solution obtained by SAV/CN scheme has visible differences with  the reference solution, and  
 violates the maximum principle $|\phi|\le 1$.  
This example  clearly indicates that the SAV/CN scheme is more accurate than  the IEQ/CN scheme, in addition to its easy implementation. 
\begin{figure}
\centering
\includegraphics[width=1.05\textwidth,keepaspectratio]{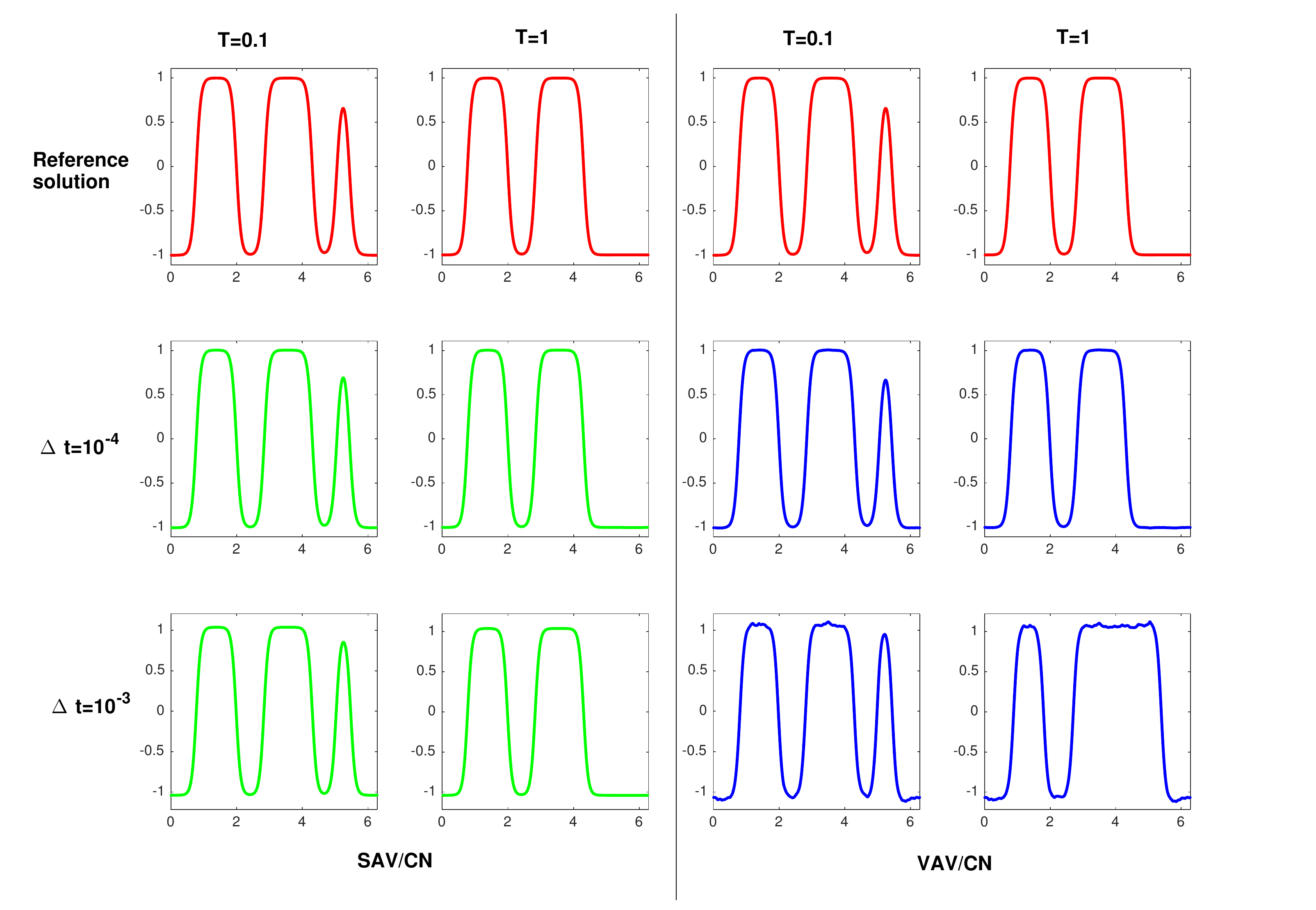}\vspace{-2em}
\caption{(Example \ref{ex1_2})  Comparison of SAV/CN  and IEQ/CN  schemes. }\label{accomp}
\end{figure}

\begin{example}\label{ex3} We examine the effect of fractional dissipation mechanism on the phase separation and coarsening process.
Consider the fractional Cahn--Hilliard equation in $[0,2\pi]^2$. 
We fix $\epsilon=0.04$ and take $s$ to be $0.1$, $0.5$, $1$, respectively. 
We use the Fourier Galerkin method with $N=2^7$, and the time step $\Delta t=8\times 10^{-6}$. 
The initial value is the sum of a randomly generated function $\phi_0(x,y)$ with  the average of $\phi$:
$$
\bar{\phi}=\frac{1}{4\pi^2}\int_{0\le x,y \le 2\pi}\,\phi \,\md x\md y,
$$
 chosen as $0.25,\;0,\;-0.25$, respectively.
\end{example}
\begin{figure}
\centering
\includegraphics[width=0.9\textwidth,height=12cm]{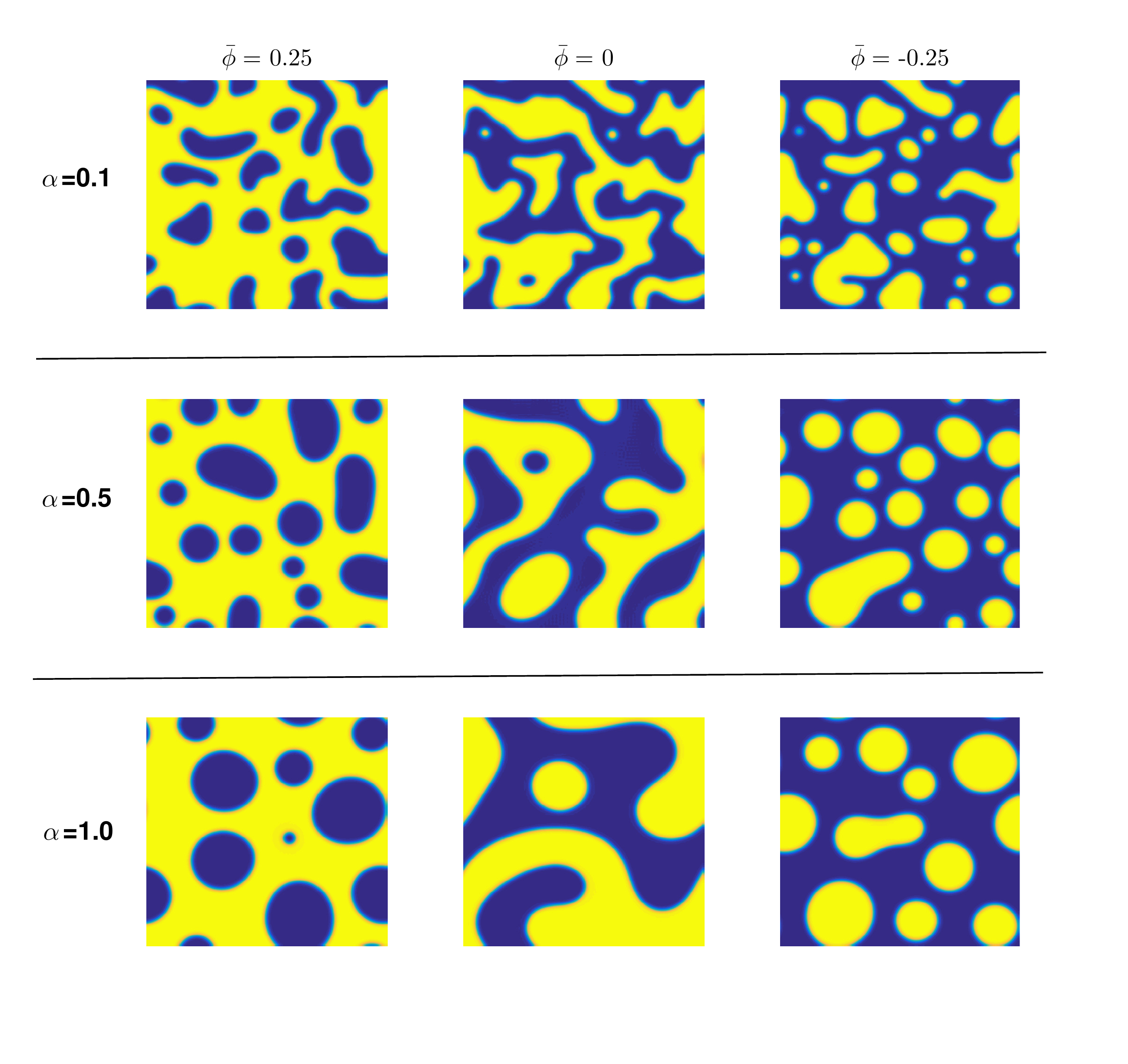}\vspace{-1cm}
\caption{(Example \ref{ex3}) Configurations at time $T = 0.032$ with random initial
condition for different values of fractional order $\alpha$ and means $\bar{\phi}$.}\label{fch}
\end{figure}

We used the SAV/BDF scheme to compute the configuration at $T=0.032$, 
which is shown in FIG. \ref{fch}. 
We observe that regardless of $\bar{\phi}$, when $\alpha$ is smaller, the phase separation and coarsening process is slower, which is consistent with the results in \cite{Ain.M17}. 

\subsection{Phase field crystals}
We now consider gradient flows of $\phi(\bm{x})$ that describes modulated structures. 
Free energy of this kind was first found in Brazovskii's work \cite{brazovskii1975phase}, known as the Landau-Brazovskii model. 
Since then, the free energy, including many variants, has been adopted to study various physical systems (see for example \cite{garel1982phase,andelman1987phase,gompper1990correlation,xu2017computing}). 
A usual free energy takes the   form, 
\begin{equation}
  \En (\phi)=\int_{\Omega}\left\{\frac{1}{4}\phi^4+\frac{1-\epsilon}{2}\phi^2-|\nabla \phi|^2+\frac{1}{2}(\Delta \phi)^2\right\}\md\bm{x}, \label{LB} 
\end{equation}
subjected to a constraint that the average $\bar{\phi}$ remains to  be a constant. 
This constraint can be automatically satisfied with  
an $H^{-1}$ gradient flow, which is also referred to as phase field crystals model because it is widely adopted in the dynamics of crystallization \cite{elder2002modeling,elder2004modeling,elder2007phase}. 
To demonstrate the flexibility of SAV approach, we will focus on a free energy with a nonlocal kernel. 
Specifically, we replace the Laplacian by a nonlocal linear operator $\opL_{\delta}$ \cite{silling2000}: 
$$
\mathcal{L}_\delta \phi(\bm{x})=\int_{\mathcal{B}(\bm{x},\delta)} \rho_\delta(|\bm{y}-\bm{x}|)\big(\phi(\bm{y})-\phi(\bm{x})\big)\md\bm{y}, 
$$
leading to the free energy, 
\begin{equation}
  \En (\phi)=\int_{\Omega}\left\{\frac{1}{4}\phi^4+\frac{1-\epsilon}{2}\phi^2+\phi\opL_{\delta} \phi+\frac{1}{2}(\opL_{\delta} \phi)^2\right\}\md \bm{x}. \label{nonlocal0} 
\end{equation}
Let the dissipation mechanism be given by $\opG=\opL_{\delta}$. 
Then we obtain the following gradient flow, 
\begin{equation}
\frac{\partial \phi}{\partial t}=\mathcal{L}_\delta(\mathcal{L}_\delta^2\phi+2\mathcal{L}_\delta \phi+(1-\epsilon)\phi+\phi^3). \label{nonlocal1}
\end{equation}
For the above problem, it is difficult to solve the linear system resulted from the IEQ approach, but it can be easily implemented with the SAV approach. 

Let $\Omega$ be a rectangular domain $[0,2\pi)^2$ with periodic boundary conditions, the eigenvalues of $\opL$ can be expressed explicitly. 
In fact, it is easy to check that for any integers $m$ and $n$, $e^{imx+iny}$ is an eigenfunction of $\mathcal{L}_\delta$, and the corresponding eigenvalue is given by 
$$
\lambda_\delta(m,n)=\int_{0}^{\delta}r\rho_\delta(r)\int_{0}^{2\pi}\left(\cos\left(r
\left(m\cos\theta+n\cos\theta\right)\right)-1\right)\md\theta \md r,
$$
which can be evaluated efficiently using a hybrid algorithm \cite{du2017fast}. 
We choose 
$$
\rho_{\delta}(|\bm{x}-\bm{x'}|)
=c_1\frac{2(4-\alpha_1)}{\pi}\frac{1}{\delta^{4-\alpha_1}r^{\alpha_1}}-c_2\frac{2(4-\alpha_2)}{\pi}\frac{1}{\delta^{4-\alpha_2}r^{\alpha_2}}, 
$$
with $c_1=20,\;c_2=19$, $\alpha_1=3,\;\alpha_2=0$ and $\delta=2$. 
Numerical results indicate that  all eigenvalues are  negative, which ensures the nonlocal operator $\opL_\delta$ is negative-semidefinite. 

\begin{figure}
  \begin{center}
    \includegraphics[width=0.9\textwidth,keepaspectratio]{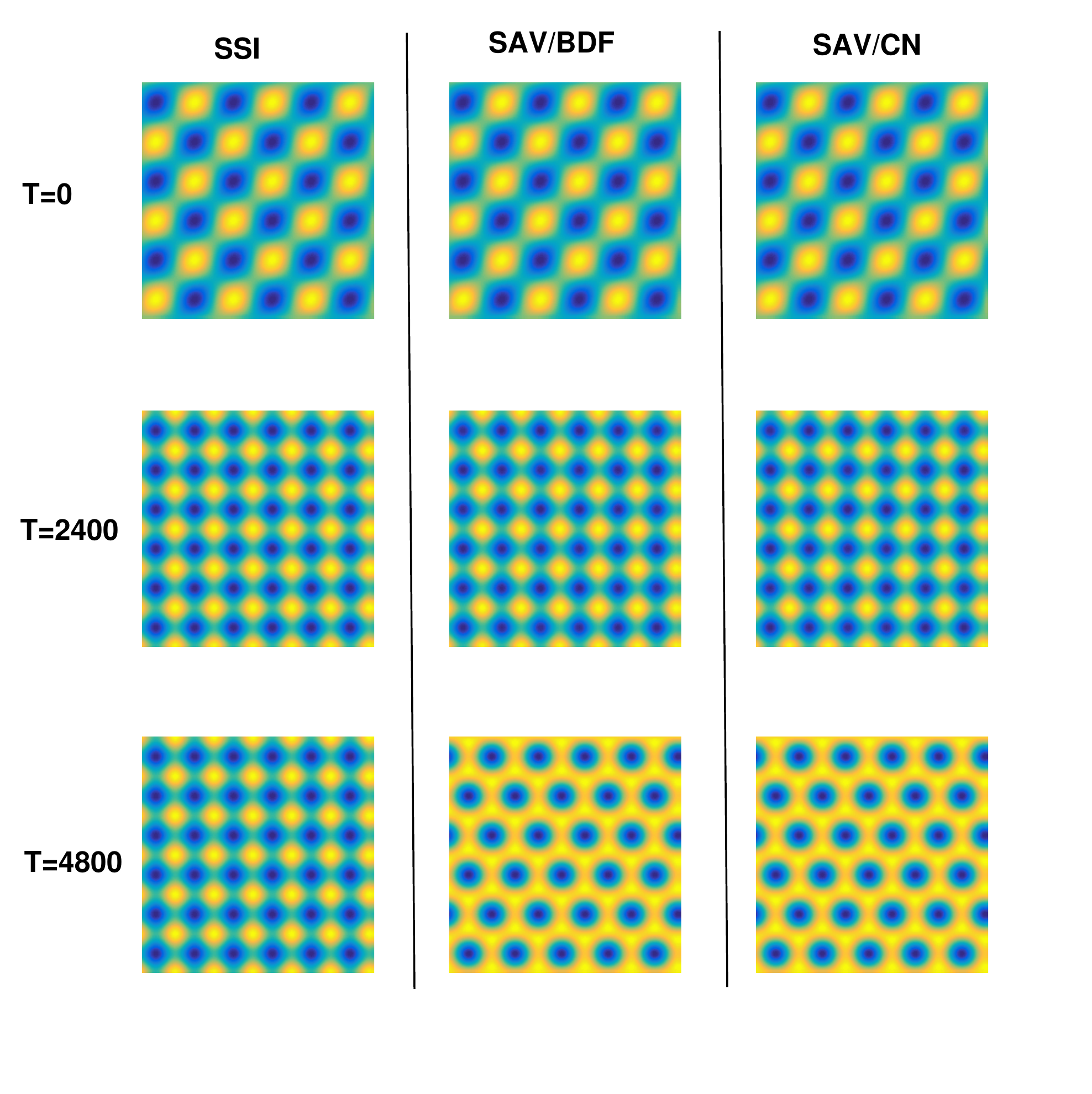}
  \end{center}
  \vspace{-2em}
  \caption{(Example \ref{npfc}) Configuration evolutions for NPFC models by three schemes.} \label{fignpfc}
\end{figure}
\begin{figure}
  \begin{center}
    \includegraphics[width=15cm]{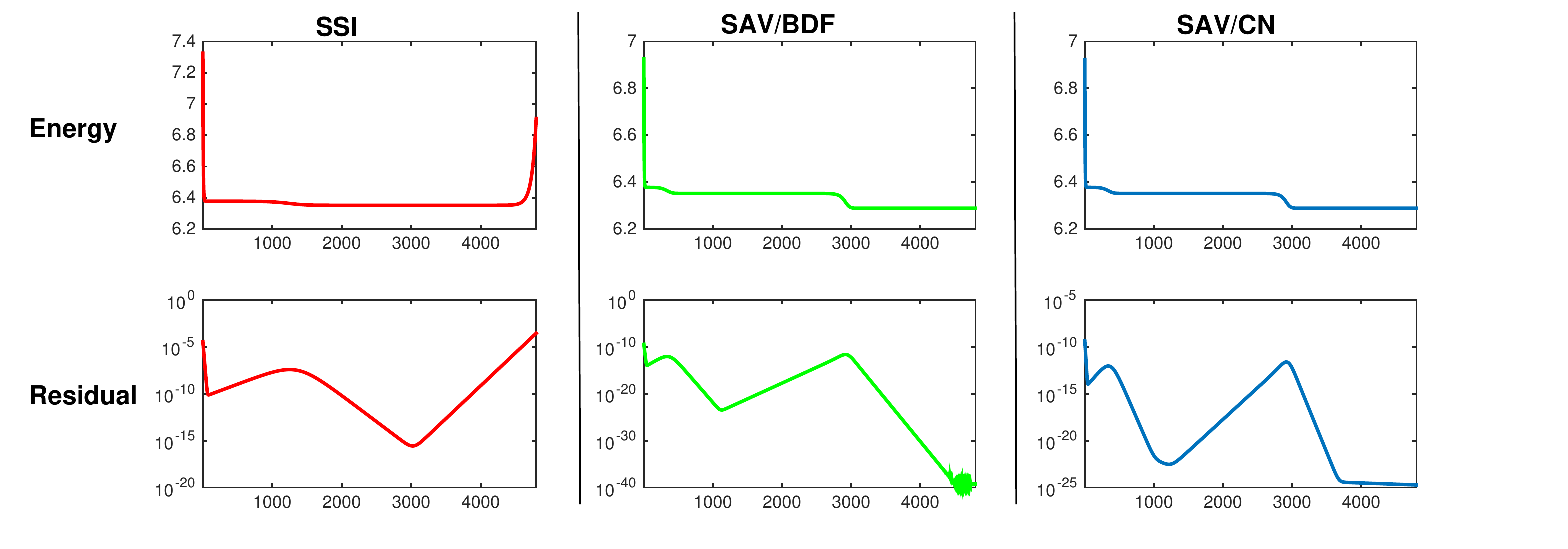}
  \end{center}
  \vspace{-1em}
  \caption{(Example \ref{npfc}) Energy evolutions, and residual evolutions for NPFC models by three schemes.} \label{fignpfcer}
\end{figure}
We applied the SAV/CN and SAV/BDF schemes to \eqref{nonlocal1}. As a  comparison, we also implemented  the following stabilized semi-implicit (SSI) scheme used in \cite{cheng2008efficient}:
\begin{eqnarray}
\displaystyle\nonumber
\frac{\phi^{n+1}-\phi^n}{\Delta t}&=&(1-\epsilon)\mathcal{L}_\delta \phi^{n+1}+2\mathcal{L}_\delta^2\phi^{n+1}+\mathcal{L}_\delta^3\phi^{n+1}+(\phi^n)^3\\\nonumber
\displaystyle
&&+a_1(1-\epsilon)\mathcal{L}_\delta(\phi^{n+1}-\phi^n)-2a_2\mathcal{L}_\delta^2(\phi^{n+1}-\phi^n)+a_3\mathcal{L}_\delta^3(\phi^{n+1}-\phi^n).
\end{eqnarray}
Specifically, we choose $a_1=0$, $a_2=1$ and $a_3=0$ which satisfy the parameters constraints provided in \cite{cheng2008efficient}. 

For the SAV schemes, we specify the linear non-negative operator as $\opL=\mathcal{L}_\delta^2+2\mathcal{L}_\delta+I$. The time step is fixed at $\Delta t=1$.
\begin{example}\label{npfc}
  We consider \eqref{nonlocal1} in the two-dimensional domain $[0, 50]\times[0,50]$ with periodic boundary conditions. 
  Fix $\epsilon=0.025$ and $\bar{\phi}=0.07$. 
  The Fourier Galerkin methods is used for spatial discretization with $N=2^7$. 
\end{example}

The initial value possesses a square structure, drawn in the first row in FIG. \ref{fignpfc}, and the configurations at $T=2400$ and $4800$ are shown in the other two rows. There is no visible difference between the results for all three schemes at $T=2400$. 
However, for both SAV schemes, the system eventually evolves to a stable hexagonal structure, while for the SSI scheme it remains to be the unstable square structure. 
We also plot the free energy and residual as functions of time for the three schemes (see FIG. \ref{fignpfcer}). 
For the SSI scheme, the residue started to increase when $T>3000$, and the free energy eventually increases, violating the energy law. 
On the other hand, the free energy curves for both SAV schemes remain to be dissipative, with  no visible difference between them. This example clearly shows that our SAV schemes have  much better stability and accuracy  than the  SSI scheme for the nonlocal model \eqref{nonlocal1}. 

\section{Higher order SAV schemes and adaptive time stepping}
We describe below how to construct higher order schemes for gradient flows by combining the SAV approach with higher order BDF schemes, and how to implement adaptive time stepping to further increase the computational efficiency.
\subsection{Higher order SAV schemes}
For the reformulated system \eqref{SAVeq_r}-\eqref{SAVeq_phi}, we can easily use the SAV approach to construct BDF-$k$ ($k\ge 3$) schemes. Since BDF-$k$ ($k\ge 3$) schemes are not A-stable for ODEs, they will not be unconditionally stable.  We will focus on BDF3 and BDF4 schemes below,  as for $k>4$, the resulting BDF-$k$ schemes do not appear to be stable. 
 
The SAV/BDF3 scheme is given by 
\begin{subequations}\label{BDF3}
\begin{align}
 & \frac{11\phi^{n+1}-18\phi^n+9\phi^{n-1}-2\phi^{n-2}}{6\Delta t}=\opG\mu^{n+1},\nonumber\\
 & \mu^{n+1}=\opL \phi^{n+1}
  +\frac{r^{n+1}}{\sqrt{\En_1[\bar{\phi}^{n+1}]}}
  U[\bar{\phi}^{n+1}],\nonumber\\
 & 11r^{n+1}-18r^n+9r^{n-1}-2r^{n-2}=\nonumber\\
\hskip 3pt & \int_\Omega\frac{U[\bar{\phi}^{n+1}]}{2\sqrt{\En_1[\bar{\phi}^{n+1}]}}
  (11\phi^{n+1}-18\phi^n+9\phi^{n-1}-2\phi^{n-2})\md\bm{x},\nonumber
\end{align}
\end{subequations}
where $\bar{\phi}^{n+1}$ is a third-order explicit approximation to $\phi(t_{n+1})$.
 The SAV/BDF4 scheme is given by 
 \begin{subequations}\label{BDF4}
\begin{align}
&  \frac{25\phi^{n+1}-48\phi^n+36\phi^{n-1}-16\phi^{n-2}+3\phi^{n-3}}{12\Delta t}=\opG\mu^{n+1},\nonumber\\
 & \mu^{n+1}=\opL \phi^{n+1}
  +\frac{r^{n+1}}{\sqrt{\En_1[\bar{\phi}^{n+1}]}}
  U[\bar{\phi}^{n+1}],\nonumber\\
& 25r^{n+1}-48r^n+36r^{n-1}-16r^{n-2}+3r^{n-3}=&\nonumber\\
& \int_\Omega\frac{U[\bar{\phi}^{n+1}]}{2\sqrt{\En_1[\bar{\phi}^{n+1}]}}
  (25\phi^{n+1}-48\phi^n+36\phi^{n-1}-16\phi^{n-2}+3\phi^{n-3})\md\bm{x}, \nonumber
\end{align}
\end{subequations}
where $\bar{\phi}^{n+1}$ is a fourth-order explicit approximation to $\phi(t_{n+1})$.

To obtain $\bar{\phi}^{n+1}$ in BDF3, we can use  the extrapolation 
 (BDF3A):
$$\bar{\phi}^{n+1}=3{\phi}^{n}-3{\phi}^{n-1}+{\phi}^{n-2},$$
or  prediction by one BDF2 step (BDF3B):
$$\bar{\phi}^{n+1}=\text{BDF2}\{\phi^{n},{\phi}^{n-1},\Delta t\}.$$
Similarly, to  get $\bar{\phi}^{n+1}$ in BDF4, we can do   the extrapolation  (BDF4A):
$$\bar{\phi}^{n+1}=4{\phi}^{n}-6{\phi}^{n-1}+4{\phi}^{n-2}-\phi^{n-3},$$
or prediction with one step of BDF3A (BDF4B):
$$\bar{\phi}^{n+1}=\text{BDF3}\{\phi^{n},{\phi}^{n-1},{\phi}^{n-2},\Delta t\}.$$
It is noticed that using the  prediction with  a lower order BDF step will double the total computation cost. 


\begin{example}\label{ex34}
We take Cahn--Hilliard equation as an example to demonstrate the numerical performances of SAV/BDF3 and SAV/BDF4 schemes.  
We fix the computational domain as $[0,2\pi)^2$ and $\epsilon=0.1$. 
We use the Fourier Galerkin method for spatial discretization with $N=2^7$. 
The initial data is  $u_0(x,y)=0.05\sin(x)\sin(y)$.
\end{example}
\begin{figure}[h]
  \begin{center}
    \includegraphics[width=1.05\textwidth]{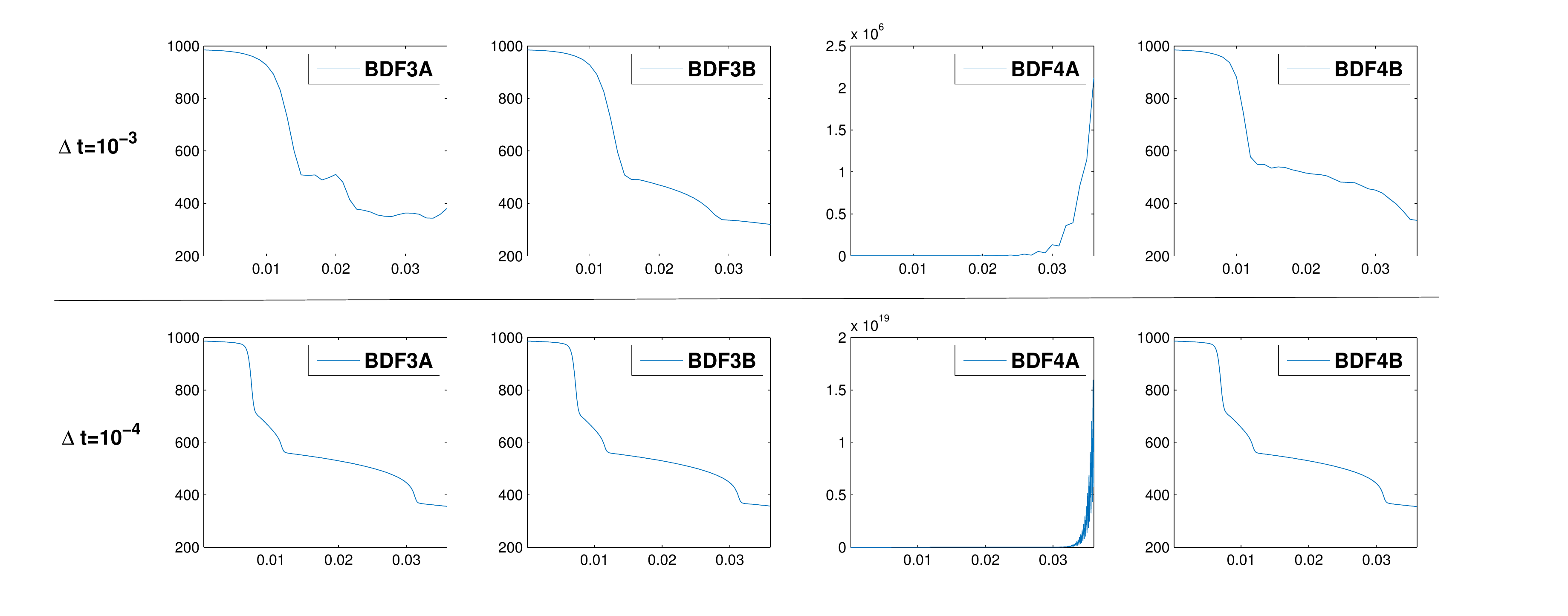}
  \end{center} \vspace{-1em}
  \caption{(Example \ref{ex34}) Energy evolutions for BDF3 and BDF4 schemes.} \label{stability}
\end{figure}

We first examine the energy evolution of BDF3A, BDF3B, BDF4A, and BDF4B with $\Delta t=10^{-3}$ and $\Delta t=10^{-4}$, respectively. The numerical results are shown in Fig. \ref{stability}. We find that  BDF4A is unstable, and BDF3A shows oscillations in energy with $\Delta t=10^{-3}$. 
Hence, in the following parts, we will focus on  BDF3B and BDF4B, which, in what follows, are denoted in abbreviation by BDF3 and BDF4.

Then, we examine the numerical errors of BDF3 and BDF4, plotted in Fig. \ref{error34}. 
The reference solution is obtained by ETDRK4 with a sufficiently small time step. 
It is observed that BDF3 and BDF4 schemes achieve the third-order and fourth-order convergence rates, respectively. 

Next, we compare the numerical results of BDF2, BDF3 and BDF4. 
The energy evolution and the configuration at $t=0.016$ are shown in FIG. \ref{bdf234} (for the first row $\Delta t=10^{-3}$, and for the second row $\Delta t=10^{-4}$). 
We observe that at $\Delta t=10^{-4}$, all schemes lead to the correct solution although there is some visible difference in the energy evolution between BDF2 and the other higher-order schemes, but at $\Delta t=10^{-3}$, only BDF4 leads to the correct solution. 
The above results indicate that higher-order SAV schemes can be used to improve accuracy.

\begin{figure}
  \begin{center}
    \includegraphics[width=1.05\textwidth]{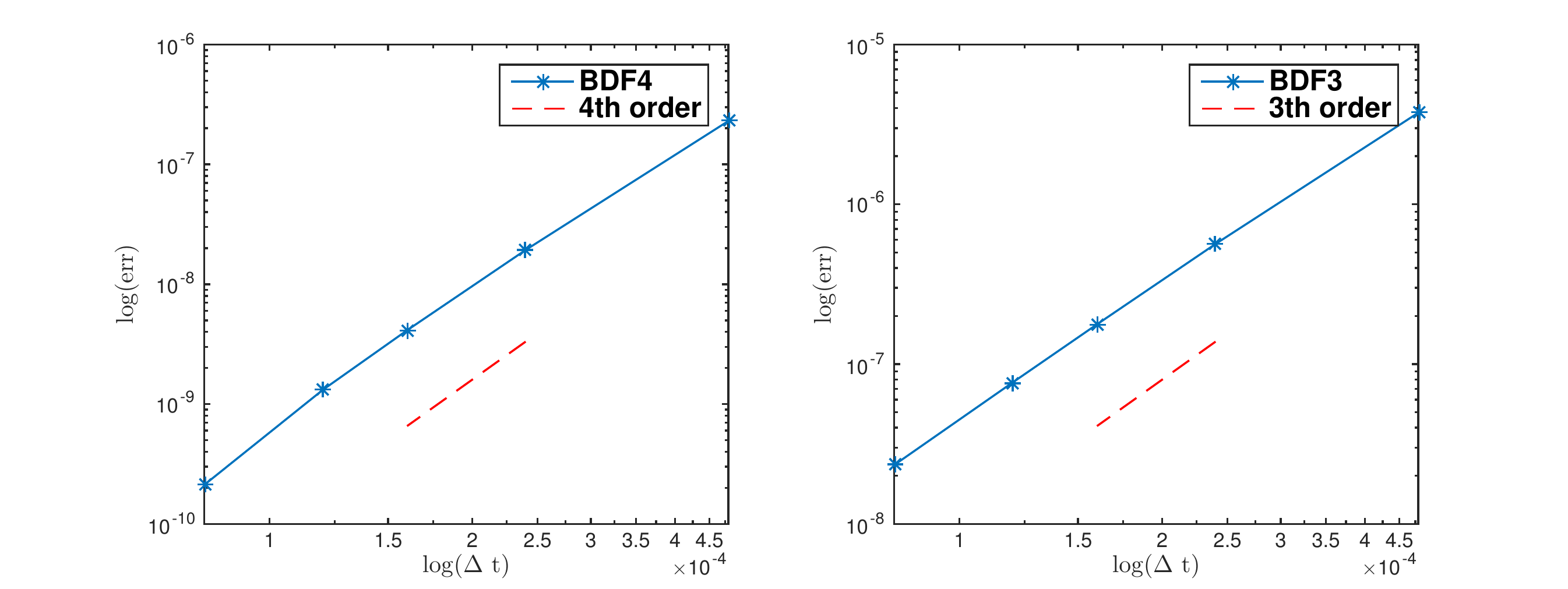}
  \end{center} \vspace{-1em}
  \caption{(Example \ref{ex34}) Numerical convergences of BDF3 and BDF4.} \label{error34}
\end{figure}
\begin{figure}
  \begin{center}
    \includegraphics[width=1.05\textwidth]{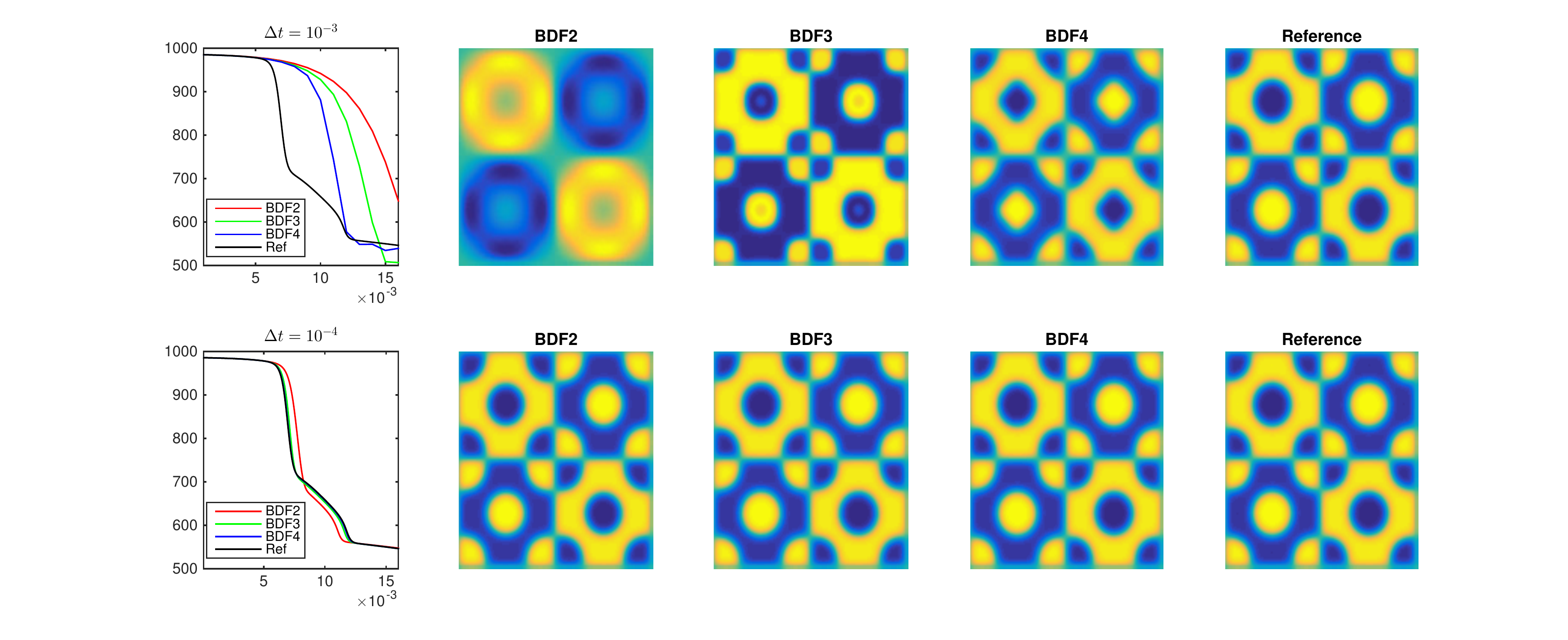}
  \end{center} \vspace{-1em}
  \caption{(Example \ref{ex34}) Comparison of BDF2, BDF3 and BDF4.} \label{bdf234}
\end{figure}

\subsection{Adaptive time stepping}
In many situations, the energy and solution of gradient flows can vary drastically  in a certain time interval, but only slightly elsewhere. 
A main advantage of unconditional energy stable schemes is that they can be easily implemented with an adaptive time stepping strategy so that the time step is only dictated by accuracy rather than by stability as with conditionally stable schemes. 


There are several adaptive strategies for the gradient flows.
Here, we follow the adaptive time-stepping strategy in \cite{shentangyang}, which has been shown to be effective  for Allen--Cahn equations.  We
update the time step size by using the formula
\begin{equation}\label{control}
  A_{\!d\!p}(e,\tau)=\rho \left(\frac{tol}{e}\right)^{1/2}\tau,
\end{equation}
where $\rho$ is a default safety coefficient, $tol$ is a reference tolerance, and $e$ is the relative error at each time level. In this example, we choose 
$\rho=0.9$ and $tol=10^{-3}$. The minimum and
maximum time steps are taken as $\tau_{min}=10^{-5}$ and
$\tau_{max}= 10^{-2}$, respectively.
The initial time step is taken as $\tau_{min}$.
	
\begin{figure}[!ht]
  \begin{center}
    \includegraphics[width=1.05\textwidth]{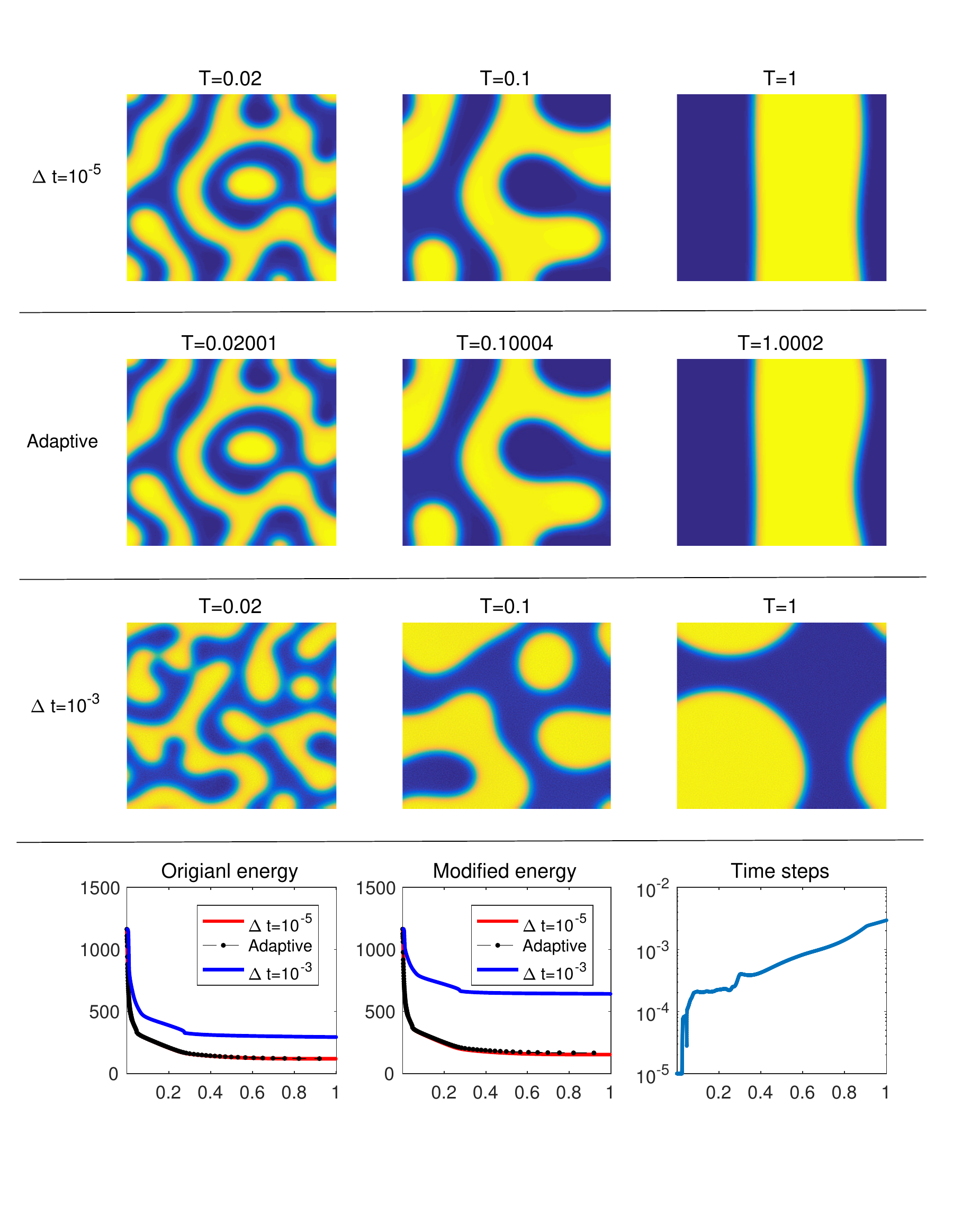}
  \end{center}\vspace{-4em}
  \caption{Example \ref{exadp}:
    Numerical comparisons among small time steps, adaptive time steps, and large time steps}\label{adp}
\end{figure}

{\small
  \begin{algorithm}\noindent
    \caption{Time step and stabilized coefficient adaptive procedure}
    \textbf{Given:} $U^{n}$, $\tau_n$ and stabilized parameter $S_n$.~~~~~~~~~~~~~~~~~~~~~~~~~~~~~~~~~~~~~~~~
    \begin{algorithmic}
      \STATE \textbf{Step 1.} Compute $U^{n+1}_{1}$ by the first order SAV scheme with $\tau_n$.
      \STATE \textbf{Step 2.} Compute $U^{n+1}_2$ by the second order SAV scheme with $\tau_n$.
      \STATE \textbf{Step 3.} Calculate $e_{n+1}=\frac{||U^{n+1}_{1}-U^{n+1}_2||}{||U^{n+1}_2||}$
      \STATE \textbf{Step 4.} $\textbf{if}$ $e_{n+1}>tol$, $\textbf{then}$\\
      \quad\quad\quad\quad\quad Recalculate time step $\tau_n\leftarrow \max\{\tau_{min},\min\{A_{\!d\!p}(e_{n+1},\tau_n), \tau_{max}\}\}$.
      \STATE \textbf{Step 5.} $\textbf{goto}$ Step 1
      \STATE \textbf{Step 6.} $\textbf{else}$\\
      \quad\quad\quad\quad\quad Update time step $\tau_{n+1}\leftarrow \max\{\tau_{min},\min\{A_{\!d\!p}(e_{n+1},\tau_n), \tau_{max}\}\}$.
      \STATE \textbf{Step 7.} $\textbf{endif}$
    \end{algorithmic}
  \end{algorithm}
}
	
We take the $2$D Cahn--Hilliard equation as an example
to demonstrate the  performence of the time adaptivity.
\begin{example}
  \label{exadp}
  Consider the $2$D Cahn--Hilliard equation
  on $[0,2\pi]\times [0,2\pi]$ with periodic boundary conditions and 
  random initial data.
  We take  $\epsilon=0.1$, and  use the Fourier spectral method with $N_x=N_y=256$.
\end{example}

For comparison, we compute a reference solution by the
SAV/CN scheme a small uniform time step $\tau=10^{-5}$ and a large uniform time step $\tau=10^{-3}$. 
Snapshots of phase evolutions, original energy evolutions and modified energy evolution, and the size of time steps in the adaptive experiment are shown in Fig. \ref{adp}. 
It is observed that the adaptive-time solutions given
in the middle row  are in good agreement with the 
reference solution presented in the top row. However, the solutions with large time step are far way from the
reference solution. This is also indicated by both the original energy evolutions and modified energy evolutions.
Note also that
the time step  changes  accordingly with the energy evolution. There are almost three-orders of magnitude variation in the  time step, which indicates 
that  the adaptive time stepping for  the
SAV schemes is very effective. 

\section{Various applications of the SAV approach}
We emphasize that the SAV approach can be applied to a large class of gradient flows. In this section, we shall apply the SAV approach to several challenging gradient flows with different characteristics and show that  the SAV approach leads to very efficient and accurate energy stable  numerical schemes for these problems and those with similar characteristics.

\subsection{Gradient flows with nonlocal free energy}
In most gradient flows, the governing free energy is local, i.e. can be written as an integral of functions about order parameters and their derivatives on a domain $\Omega$. 
Actually, many of these models can be derived as approximations of density functional theory (DFT) (see for example \cite{lutsko2010recent}) that takes a non-local form. 
Recently, there have been growing interests in nonlocal models, aiming to describe phenomena that is difficult to be captured in local models. Examples include peridynamics \cite{silling2000} and quasicrystals \cite{archer2013quasicrystalline,barkan2014controlled,jiang2017stability}. 

Although more complicated forms are possible, we consider the following free energy functional that covers those in the models mentioned above, 
\begin{align}
  \En[\phi]=&\int_{\Omega}\Big(F(\phi)+\frac{1}{2}\phi\opL\phi\Big)\md \bm{x}+\frac{1}{2}\int_{\Omega}\int_{\Omega}K(|\bm{x}-\bm{x'}|)\phi(\bm{x})\phi(\bm{x'})\md\bm{x'}\md\bm{x} \nonumber\\
  :=&(F(\phi),1)+\frac 12(\opL \phi, \phi)+\frac{1}{2}(\phi,\opL_n\phi). 
\end{align}
where $\opL$ is a local symmetric positive differential operator, $K(|\bm{x}-\bm{x'}|)$ is a kernel function, $F(\phi)$ is a nonlinear (local) free energy density, and the operator $\opL_n$ is given by 
\begin{equation}
  (\opL_n\phi)(\bm{x})=\int K(|\bm{x}-\bm{x'}|)\phi(\bm{x'})\md\bm{x'}. 
\end{equation}
Then, the corresponding gradient flow associated with energy dissipation $\opG$ is 
\begin{equation}\label{mixing}
\phi_t=\opG\left(\opL \phi+\opL_n\phi +f(\phi)\right),
\end{equation}
where $f(\phi)=F'(\phi)$.

In general, $\opL$ may not be positive and shall be controlled by the nonlinear term $F(\phi)$, as  in the non-local models we mentioned above. 
In this case, we may put part of the non-local term together with the nonlinear term, 
and handle the non-local term explicitly in the SAV approach. 
More precisely, we split $\opL_n=\opL_{n1}+\opL_{n2}$ set 
$$
\En_l(\phi)=\frac{1}{2}(\opL \phi, \phi)+\frac 12(\phi,\opL_{n1}\phi) ,\quad \En_n(\phi)=\frac{1}{2}(\phi,\opL_{n2}\phi)+(F(\phi),1), 
$$
where we assume that $\opL_{n1}$ is positive and $\En_n(\phi)\ge C_0>0$. 
We introduce a scalar auxiliary variable 
$$
r(t)=\sqrt{\En_n(\phi)},
$$
and rewrite  the gradient flow \eqref{mixing} as
\begin{subequations}\label{mixSAV}
\begin{align}
\phi_t&=\opG\left((\opL+\opL_{n1})\phi+\frac{r}{\sqrt{\En_n(\phi)}}\left(\opL_{n2} \phi +f(\phi)\right)\right),\\
r_t&=\frac{1}{2\sqrt{\En_n(\phi)}}\left(\phi_t,\opL_{n2} \phi +f(\phi)\right).
\end{align}
\end{subequations}
Then the second-order BDF scheme based on SAV approach is:
\begin{subequations}\label{mixSAV2}
\begin{align}
&  \frac{3\phi^{n+1}-4\phi^n+\phi^{n-1}}{2\Delta t}=\opG\mu^{n+1}, \\
&  \mu^{n+1}=(\opL+\opL_{n1}) \phi^{n+1}
  +\frac{r^{n+1}}{\sqrt{\En_n[\bar{\phi}^{n+1}]}}\left(\opL_{n2} \bar{\phi}^{n+1} +f(\bar{\phi}^{n+1})\right), \\
&3r^{n+1}-4r^n+r^{n-1}=\frac{1}{2\sqrt{\En_n[\bar{\phi}^{n+1}]}}\Big(\opL_{n2} \bar{\phi}^{n+1} +f(\bar{\phi}^{n+1}),3\phi^{n+1}-4\phi^n+\phi^{n-1}\Big). 
\end{align}
\end{subequations}
Similarly, it is easy to show that the above scheme is unconditionally energy stable, and that the scheme only requires, at each time step,  solving two linear systems of the form:
\begin{equation}\label{common2}
(I-\lambda\Delta t\opG(\opL+\opL_{n1}))\phi=f.
\end{equation}
In particular, if $\opL>\opL_n$, a good choice can be $\opL_{n1}=0$ and $\opL_{n2}=\opL_n$, and only need to solve equations with common differential operators. Note also that the phase field crystal model considered in Subsection 3.2 is a special case with $\opL=0$ and $\opL_{n2}=0$.

Note that the above problem can not be easily treated with convex splitting or IEQ approaches.


\subsection{Molecular beam epitaxial (MBE) without slope selection}
The energy functional for molecular beam epitaxial (MBE) without slope selection is given by \cite{Li.L03}:
\begin{equation}
\En(\phi)=\int_{\Omega}[-\frac{1}{2}\ln(1+|\nabla \phi|^2)+\frac{\eta^2}{2}|\Delta \phi|^2]dx.
\end{equation}
A main difficulty is that the first part of the energy density, $-\frac{1}{2}\ln(1+|\nabla \phi|^2)$, is unbounded from below, so the IEQ approach can not be applied. However, one can show that \cite{CSY17} for any $\alpha_0>0$, there exist $C_0>0$ such that
\begin{equation}\label{ori:ene1}
\En_1(\phi)=\int_{\Omega}[-\frac{1}{2}\ln(1+|\Delta \phi|^2)+\frac{\alpha}{2}|\Delta \phi|^2]dx \ge -C_0,\quad \forall \alpha\ge \alpha_0>0.
\end{equation}
Hence, we can choose $\alpha_0<\alpha<\eta^2$, and split $\En(\phi)$ as
$$\En(\phi)=\En_1(\phi)+\int_{\Omega} \frac{\eta^2-\alpha}{2}|\Delta \phi|^2dx.$$
Now we introduce a scalar auxiliary variable
$$r(t)=\sqrt{\int_{\Omega}\frac{\alpha}{2}|\Delta\phi|^2-\frac{1}{2}\log(1+|\nabla\phi|^2)dx+C_0},$$
and rewrite the gradient flow for MBE as
\begin{subequations}\label{new2}
\begin{align}
&\phi_t+M[(\eta^2-\alpha)\Delta^2\phi+ G(\phi){r(t)}]=0, \label{new2:no1}\\
&r_t=\frac{1}{2}\int_{\Omega} G(\phi) \phi_t dx, \label{new2:no3}
\end{align}
\end{subequations}
where
$$G(\phi)=\frac{\frac{\delta E_1(\phi)}{\delta \phi}}{\sqrt{\int_{\Omega}\frac{\alpha}{2}|\nabla\phi|^2-\frac{1}{2}\log(1+|\nabla\phi|^2)dx+C_0}}.$$
Therefore, we can use the SAV approach to construct, for \eqref{new2}, second-order, linear, unconditionally energy stable schemes  which only require, at each time step, solving two linear equations of the form 
$$(I+\Delta t\Delta^2)\phi=f.$$
It is clear that this above approach is much more efficient and easier to implement than existing schemes (cf., for instance, \cite{Li.L03,shen2012second,MR3290959}). 

\subsection{Q-tensor model for rod-like liquid crystals}
In many liquid crystal models, a symmetric traceless second-order tensor $Q\in \mathbb{R}^{3\times 3}$ is used to described the orientational order. 
We consider the Landau-de Gennes free energy \cite{de1971short} that has been applied to study various phenomena, both analytically (see for example \cite{majumdar2012radial,paicu2012energy}) and numerically (see for example \cite{sheng1982boundary,schopohl1987defect,zhao2017novel}). 
It can be written as $\En[Q(\bm{x})]=\En_{b}+\En_{e}$, where
\begin{align}
  \En_b&=\int_{\Omega}f_b(Q)\md\bm{x}=\int_\Omega[\frac{a}{2}\mbox{tr}(Q^2)-\frac{b}{3}\mbox{tr}(Q^3)+\frac{c}{4}(\mbox{tr}(Q^2))^2]\md\bm{x}, \\
  \En_e&=\int_\Omega[\frac{L_1}{2}|\nabla Q|^2+\frac{L_2}{2}\partial_{i}Q_{ik}\partial_{j}Q_{jk}
  +\frac{L_3}{2}\partial_{i}Q_{jk}\partial_{j}Q_{ik}]\md\bm{x}. 
\end{align}
To ensure the lower-boundedness, it requires $c>0,\;L_1,L_1+L_2+L_3>0$ so that we have  $\En_b,\,\En_e\ge 0$. 

We consider the $L^2$ gradient flow, 
\begin{align}\label{GflowQ}
  \frac{\partial Q_{ij}}{\partial t}=-\frac{\delta \En}{\delta Q_{ij}},
  \quad 1\le i,j\le 3,
\end{align}
with 
\begin{align}
  \frac{\delta \En_b}{\delta Q_{ij}}=&aQ_{ij}-bQ_{ik}Q_{kj}+(c+\frac{b}{3})Q_{ij}\mbox{tr}(Q)^2, \\
  \frac{\delta \En_e}{\delta Q_{ij}}=&-L_1\Delta Q_{ij} - 
  \frac{L_2+L_3}{2}(\partial_{ik}Q_{jk}+\partial_{jk}Q_{ik}-\frac{2}{3}(\partial_{kl}Q_{kl})\delta_{ij}). 
\end{align}
We can see that the components of $Q$ are coupled both in $\En_b$ and $\En_e$, which makes it difficult to deal with  numerically. 

Since we have a positive quartic term $c(\mbox{tr}Q^2)^2$, we can choose $a_1,C_0>0$ such that $f_b(Q)-a_1\mbox{tr}(Q^2)/2+C_0>0$. 
We introduce a scalar auxiliary variable
$$
r(t)=\sqrt{\En_1}:=\sqrt{\En_b(Q)-\int_\Omega\frac{a_1}{2}\mbox{tr}(Q^2) \md\bm{x}+C_0}. 
$$
Let $\opL$ be defined as 
$$
\opL Q=a_1Q+\frac{\delta\En_e}{\delta Q}, 
$$
where $\delta\En_e/\delta Q:=(\delta\En_e/{\delta Q_{ij}})$ defines a linear operator on $Q$. 
Hence, we can rewrite \eqref{GflowQ} as:
\begin{equation}
 \label{GflowQb}
\begin{split}
  &\frac{\partial Q}{\partial t}=-\mu,\\
 &\mu= \opL Q +\frac{r(t)}{\sqrt{\En_1}}\frac{\delta\En_1}{\delta Q};\\
 &r_t=\frac1{2\sqrt{\En_1}}\int_\Omega \frac{\delta\En_1}{\delta Q}:Q_t \md\bm{x}.
\end{split}
\end{equation}
 Then, the SAV/CN scheme for \eqref{GflowQb} is:
\begin{subequations} \label{GflowQ2}
\begin{align}
  \frac{Q^{n+1}-Q^n}{\Delta t}=&-\mu^{n+1/2}, \\
  \mu^{n+1/2}=&\opL\frac{1}{2}(Q^{n+1}+Q^n)
  +\frac{r^{n+1}+r^n}{2\sqrt{\En_1[\bar{Q}^{n+1/2}]}}
  \frac{\delta\En_1}{\delta Q}[\bar{Q}^{n+1/2}], \\
  r^{n+1}-r^n=&\int_\Omega\frac{1}{2\sqrt{\En_1[\bar{Q}^{n+1/2}]}}\left(\frac{\delta\En_1}{\delta Q}[\bar{Q}^{n+1/2}]\right)_{ij}
  (Q_{ij}^{n+1}-Q_{ij}^n)\md\bm{x}. 
\end{align}
\end{subequations}
One can easily show that the above scheme is unconditionally energy stable.
 Below, we describe how to implement it efficiently. 
 
 Denoting
$$
S_{ij}=\frac{1}{2\sqrt{\En_1[\bar{Q}^{n+1/2}]}}\left(\frac{\delta\En_1}{\delta Q}[\bar{Q}^{n+1/2}]\right)_{ij},\quad S=(S_{ij}),
$$
 we can rewrite \eqref{GflowQ2} into  a coupled linear system of the form 
\begin{align}\label{qij}
  (1+\lambda \opL)Q_{ij}^{n+1}+S_{ij}\alpha^{n+1}=b_{ij}^n, \quad 1\le i,j\le 3, 
\end{align}
where $\lambda=\frac{\Delta t}2$, and the scalar $\alpha^{n+1}=\sum_{k,l}(S_{kl},Q_{kl}^{n+1})$ can be solved explicitly as follows. 
Multiplying \eqref{qij} with $(1+\lambda\opL)^{-1}$, we get
\begin{align}
  Q_{ij}^{n+1}+\alpha^{n+1}\big((I+\lambda \opL)^{-1}S\big)_{ij}=\Big((1+\lambda \opL)^{-1}b^n\Big)_{ij}, \quad 1\le i,j\le 3.
\end{align}
Then taking the inner product of the above with $S$, we obtain 
\begin{align}
  \alpha^{n+1}\bigg(1+\sum_{i,j}\Big(S_{ij},\big((1+\lambda\opL)^{-1}S\big)_{ij}\Big)\bigg)=\sum_{i,j}\Big((I+\lambda\opL)^{-1}b^n\Big)_{ij}. 
\end{align}
Thus, we can find $\alpha^{n+1}$ by solving, for each $i,j$, two equations of the form 
\begin{equation}\label{Qij2}
 (I+\lambda\opL)Q_{ij}=g_{ij},
\end{equation}
which can be efficiently solved since they are simply coupled second-order equations with constant coefficients. For example, 
in the case of periodic boundary conditions, we can write down the solution explicitly as follows. 
Because $Q$ is symmetric and traceless, we choose $\bm{x}=(Q_{11},Q_{22},Q_{12},Q_{13},Q_{23})^T$ as independent variables. 
We expand the above five variables by Fourier series, 
$$
Q_{ij}=\sum_{k_1,k_2,k_3}\hat{Q}_{ij}^{k_1k_2k_3}\exp(i(k_1x_1+k_2x_2+k_3x_3)). 
$$
Then, when solving the linear equation \eqref{Qij2}, only the Fourier coefficients with the same indices $(k_1,k_2,k_3)$ are coupled. More precisely, for each  $(k_1,k_2,k_3)$, and the coefficient matrix for  the unknowns  $\hat{Q}_{ij}^{k_1k_2k_3}$ with $(ij=11,22,12,13,23)$ is given by 
\begin{align*}
&A_{k_1k_2k_3}=1+\lambda(a_1+L_1(k_1^2+k_2^2+k_3^2))I\nonumber\\
&-\lambda\frac{L_2+L_3}{2}
\left(
\begin{array}{ccccc}
  -\frac{2}{3}k_1^2-\frac{1}{3}k_3^2 & \frac{1}{3}k_2^2-\frac{1}{3}k_3^2 & -\frac{1}{3}k_1k_2 & -\frac{1}{3}k_1k_3 & \frac{2}{3}k_2k_3\\
  \frac{1}{3}k_1^2-\frac{1}{3}k_3^2 & -\frac{2}{3}k_2^2-\frac{1}{3}k_3^2 & -\frac{1}{3}k_1k_2 & \frac{2}{3}k_1k_3 & -\frac{1}{3}k_2k_3\\
  -\frac{1}{2}k_1k_2 & -\frac{1}{2}k_1k_2 & -\frac{1}{2}k_1^2-\frac{1}{2}k_2^2 & -\frac{1}{2}k_2k_3 & -\frac{1}{2}k_1k_3 \\
  0 & \frac{1}{2}k_1k_3 & -\frac{1}{2}k_2k_3 & -\frac{1}{2}k_1^2-\frac{1}{2}k_3^2 & -\frac{1}{2}k_1k_2 \\
  \frac{1}{2}k_2k_3 & 0 & -\frac{1}{2}k_1k_3 & -\frac{1}{2}k_1k_2 & -\frac{1}{2}k_2^2-\frac{1}{2}k_3^2 
\end{array}
\right). 
\end{align*}
Hence, we can obtain the Fourier coefficients $\hat{Q}_{ij}^{k_1k_2k_3}$, for each $i,\,j$, by inverting the above $5\times 5$ matrix.

\subsection{Phase-field model of two phase incompressible flows}
We consider here a phase-field model for the mixture of two incompressible, immiscible  fluids \cite{shen2015decoupled}. Let $\phi$ be a labeling function to identify the two fluids, i.e.,
\begin{equation}
 \phi({x},t)=\begin{cases} \hskip 8pt 1\quad {x}\in \text{in fluid 1},\\
 -1\quad {x}\in \text{in fluid 2},\end{cases}
\end{equation}
with a smooth interfacial layer of thickness $\eta$. 
 Consider a mixing free energy
\begin{equation*}
E_{mix}(\phi) =\lambda
\int_\Omega (\frac
12|\nabla \phi|^2+F(\phi))\, \md \bm{x}=\lambda
\int_\Omega \frac
12|\nabla \phi|^2\, \md \bm{x} + E_1(\phi),
\end{equation*}
with $F(\phi)=\frac1{4\eta^2}(\phi^2-1)^2$. For the sake of simplicity, we consider the two fluids having the same density $\rho_0$. Then, the Navier-Stokes Cahn-Hilliard phase field model for the two-phase incompressible flow is as follows (cf., for instance, \cite{anderson1998diffuse,liu2003phase}):
\begin{eqnarray} \label{phase1}
&\phi_t + (u\cdot\nabla) \phi = \nabla \cdot(\gamma \nabla \mu),\\
&\mu=\frac{\delta
  E_{mix}}{\delta\phi}=-\lambda\Delta \phi
+\lambda F'(\phi); \label{phase2}
\end{eqnarray}
and
\begin{equation} \label{vel}
\rho_0(u_t + (u\cdot\nabla)u)=\nu\Delta
u-\nabla p+\mu\nabla \phi;
\end{equation}
and 
\begin{equation}\label{divfree}
 \nabla \cdot u = 0;
\end{equation}
subject to suitable boundary conditions for $\phi,\mu,u$. In the above, $\lambda$ is a mixing coefficient, $\gamma$ is a relaxation coefficients and $\nu$ is the viscosity coefficient; the unknown are $\phi,\mu,u,p$ with $u$ being the velocity and $p$ the pressure.
Taking the inner product of \eqref{phase1},  \eqref{phase2} and  \eqref{vel} with $\mu$, $\phi_t$ and $u$, respectively, we  obtain the following energy dissipation law:
\begin{equation*}\label{energy}
\frac{\md}{\md t}\int_\Omega \{
\frac{\rho_0}{2}\md \bm{x}\,|u|^2+\frac{\lambda}2|\nabla \phi|^2+ 
 {\lambda}F(\phi) \} 
= -\int_\Omega \{\nu|\nabla u|^2+\gamma |\nabla \mu|^2\}.
\end{equation*}
To apply the SAV approach, we introduce $r(t)=\sqrt{E_1(\phi)+\delta}$, and replace  \eqref{phase2} by 
\begin{equation}\label{phase2b}
 \begin{split}
& \mu=-\lambda\Delta \phi
+\lambda\frac{r(t)}{\sqrt{E_1(\phi)+\delta}} F'(\phi),\\
& r_t=\frac1{2\sqrt{E_1(\phi)+\delta}}\int_\Omega F'(\phi)\,\phi_t\md\bm{x}.
\end{split}
\end{equation}

Let us denote $\bar\phi^{n+1}:=2\phi^n-\phi^{n-1}$ and $\bar u^{n+1}:=2u^n-u^{n-1}$. 
Then, we can modify the scheme (3.9) in \cite{She.Y10b} to construct the SAV/BDF2 scheme for \eqref{phase1}-\eqref{phase2b}-\eqref{vel}-\eqref{divfree}:

\begin{equation}\label{phasefiled1}
\begin{split}
&\frac{3\phi^{n+1}-4\phi^n+\phi^{n-1}}{2\Delta t}+
  \hat u^{n+1}\cdot \nabla\bar\phi^{n+1}=\gamma\Delta
  \mu^{n+1},\\
   & \mu^{n+1}=-\lambda\Delta\phi^{n+1}+\frac{\lambda r^{n+1}}{\sqrt{E_1[\bar\phi^{n+1}]+\delta}}F'(\bar\phi^{n+1}),\\
&  \frac{3r^{n+1}-4r^n-r^{n-1}}{2\Delta t}=
  \int_\Omega \frac{F'(\bar\phi^{n+1})}{2\sqrt{E_1[\bar\phi^{n+1}]+\delta}}\frac{3\phi^{n+1}-4\phi^{n}+\phi^{n-1}}{2\Delta t} \, dx, 
  \end{split}
\end{equation}
where $\hat u^{n+1}=\tilde u^{n+1}$ or $\hat u^{n+1}=2u^n-u^{n-1}$;
\begin{equation}
\begin{split}
&\rho_0\{\frac{3\tilde u^{n+1}-4u^n+u^{n-1}}{2\Delta t}+\bar u^{n+1}\cdot\nabla
  \tilde u^{n+1}\}\\ &\hskip .5in
  -\nu\Delta \tilde u^{n+1} +\nabla p^n-\mu^{n+1}\nabla \bar\phi^{n+1}=0;
\end{split}
\end{equation}

\begin{equation}
\begin{split}
&\Delta (p^{n+1}-p^n)=\frac{3\rho_0}{2\Delta t}\nabla\cdot \tilde u^{n+1}, \quad
\partial_n (p^{n+1}-p^n)|_{\partial\Omega}=0;\\
&u^{n+1}=\tilde u^{n+1}-\frac{2\Delta t}{3\rho_0} \nabla (p^{n+1}-p^n).
\end{split}
\end{equation}
Several remarks are in order:
\begin{itemize}
 
\item The pressure is decoupled from the rest by a pressure-correction projection method;  $r^{n+1}$  can be eliminated from \eqref{phasefiled1}.

 \item If we take $\hat u^{n+1}=\tilde u^{n+1}$, one can show, similarly as in \cite{She.Y10b},  that the scheme is unconditionally stable,  linear and second-order, but weakly coupled between $(\phi^{n+1},w^{n+1},\tilde u^{n+1})$ by the term $\tilde u^{n+1}\cdot \nabla\bar\phi^{n+1}$. It is easy to see that the weakly coupled linear system is positive definite.
 
 \item On the other hand, if we take $\hat u^{n+1}=2u^n-u^{n-1}$, the scheme is   linear, decoupled and second-order, only requires solving a sequence of Poisson type equations at each time step, but not unconditionally energy stable.
 
 \item One can use the decoupled  scheme with $\hat u^{n+1}=2u^n-u^{n-1}$ as a preconditioner for the coupled scheme with $\hat u^{n+1}=\tilde u^{n+1}$.
\end{itemize}

\section{Conclusion\label{Concl}}
We proposed a new SAV approach for dealing with a large class of  gradient flows. This approach  keeps all advantages of the IEQ approach, namely, the schemes are unconditionally energy stable, linear and second-order accurate, while  offers the following additional advantages: 
\begin{itemize}
\item It greatly simplifies the implementation and is much more efficient:
at each time step of the SAV schemes, the computation of 
 the scalar auxiliary  variable $r^{n+1}$ and the original unknowns are totally decoupled and only requires solving  linear systems with  constant coefficients.
\item   It only requires  $E_1(\phi):=\int_\Omega F(\phi) \md\bm{x}$, instead of $F(\phi)$, be bounded from below. It also allows the energy functional contains multiple integrals. Thus it applies to  a larger class of gradient flows. 
\item It offers an effective approach to deal with gradient flows with non-local free energy.
\end{itemize}
Furthermore,  we can even construct higher-order stiffly stable, albeit not unconditionally stable,  schemes with all the  above attributes by combining SAV approach with higher-order BDF schemes. 

When coupled with a suitable time adaptive strategy, the SAV schemes are extremely efficient and  applicable to a large class of gradient flows. 
Our numerical results show that the SAV schemes are not only more efficient, but are also more accurate than other schemes.

Although the SAV approach appears to be applicable and very effective for a large class of gradient flows,  an essential requirement for the SAV approach to produce physically consistent results is  that  $\opL$ in the energy splitting  \eqref{energy0}   contains enough  dissipative terms (with at least linearized highest derivative terms).
To be specific, $\En_1[\phi]$ shall not include all terms with the highest derivatives in $\En[\phi]$, i.e., $\opL$ has to include a term, if not all, with the highest derivatives.
 
We have focused in this paper on gradient flows with linear dissipative mechanisms. For problems with highly nonlinear   dissipative mechanisms, e.g., $\opG \mu=\nabla\cdot(a(\phi)\nabla \mu)$ with degenerate or singular $a(\phi)$ such as in Wasserstein   gradient flows or  gradient flows with strong anisotropic free energy \cite{chen2013efficient}, the direct application of SAV approach may not be the most  efficient as it leads to  degenerate or singular  nonlinear equations to solve at each time step. In \cite{She.X17}, we developed an efficient predictor-corrector strategy to deal with this type of problems without the need to solving nonlinear equations.

While it is important that  numerical schemes for gradient flows  obey a discrete energy dissipation law, 
 the energy dissipation itself does not guarantee the convergence. In a future work, we shall conduct an error analysis for the SAV approach  and identify sufficient conditions under which the SAV schemes will converge, with an error estimate, to the exact solution of the original problem.

\bibliographystyle{siamplain}
\bibliography{bib_gflow}

\end{document}